\numberwithin{equation}{section}
\newcommand{\bbE}{{\ensuremath{\mathbb E}} }
\newcommand{\bbN}{{\ensuremath{\mathbb N}} }
\newcommand{\bbP}{{\ensuremath{\mathbb P}} }
\newcommand{\bbR}{{\ensuremath{\mathbb R}} }
\newcommand{\bbZ}{{\ensuremath{\mathbb Z}} }
\newcommand{\gb}{\beta}
\newcommand{\gk}{\kappa}
\newcommand{\gO}{\Omega}
\newcommand{\gL}{\Lambda}
\newcommand{\tf}{\textsc{f}}
\newcommand{\cA}{{\ensuremath{\mathcal A}} }
\newcommand{\cH}{{\ensuremath{\mathcal H}} }
\newcommand{\cN}{{\ensuremath{\mathcal N}} }
\newcommand{\cV}{{\ensuremath{\mathcal V}} }
\newcommand{\cZ}{{\ensuremath{\mathcal Z}} }
\newcommand{\bP}{{\ensuremath{\mathbf P}} }
\newcommand{\tP}{  \tilde {\mathbb{P}}}
\newcommand{\ind}{\mathbf{1}}
\newcommand{\lint}{\llbracket}
\newcommand{\rint}{\rrbracket}
\definecolor{darkred}{rgb}{0.7,0.1,0.1}
\definecolor{darkgreen}{rgb}{0.1,0.7,0.1}
\newtheorem{theorema}{Theorem}
\newtheorem{theorem}{Theorem}[section]
\newtheorem{lemma}[theorem]{Lemma}
\newtheorem{proposition}[theorem]{Proposition}
\newtheorem{rem}[theorem]{Remark}
\newcommand{\RN}[1]{%
  \textup{\uppercase\expandafter{\romannumeral#1}}%
}
\renewcommand{\tilde}{\widetilde}
\newcommand{\sumtwo}[2]{\sum_{\substack{#1 \\ #2}}} 
\def\captionfont@{\footnotesize}
\def\captionheadfont@{\scshape}
\long\def\@makecaption#1#2{%
  \vspace{2mm}
  \setbox\@tempboxa\vbox{\color@setgroup
    \advance\hsize-6pc\noindent
    \captionfont@\captionheadfont@#1\@xp\@ifnotempty\@xp
        {\@cdr#2\@nil}{.\captionfont@\upshape\enspace#2}%
    \unskip\kern-6pc\par
    \global\setbox\@ne\lastbox\color@endgroup}%
  \ifhbox\@ne 
    \setbox\@ne\hbox{\unhbox\@ne\unskip\unskip\unpenalty\unkern}%
  \fi
  \ifdim\wd\@tempboxa=\z@ 
    \setbox\@ne\hbox to\columnwidth{\hss\kern-6pc\box\@ne\hss}%
  \else 
    \setbox\@ne\vbox{\unvbox\@tempboxa\parskip\z@skip
        \noindent\unhbox\@ne\advance\hsize-6pc\par}%
\fi
  \ifnum\@tempcnta<64 
    \addvspace\abovecaptionskip
    \moveright 3pc\box\@ne
  \else 
    \moveright 3pc\box\@ne
    \nobreak
    \vskip\belowcaptionskip
  \fi
\relax
}
\def\writefig#1 #2 #3 {\rlap{\kern #1 truecm
\raise #2 truecm \hbox{#3}}}
\title[SOS above a wall in the delocalized phase]{ Typical height of the (2+1)-D Solid-on-Solid surface with pinning above a wall in the delocalized phase}
\author[Naomi Feldheim]{Naomi Feldheim }
 \address{Naomi Feldheim \hfill\break
Bar-Ilan University, 5290002, Ramat Gan, Israel.}
\email{naomi.feldheim@biu.ac.il}
\author[Shangjie Yang]{Shangjie Yang}
 \address{Shangjie Yang \hfill\break
Bar-Ilan University, 5290002, Ramat Gan, Israel.}
\email{shangjie.yang@biu.ac.il}
\keywords{Random surface, Solid-On-Solid, wetting, typical height, delocalization behavior.\\\textit{AMS subject classification}:  60K35, 82B27,}
\begin{document}

\setcounter{tocdepth}{1}


\begin{abstract}
We study the typical height of the (2+1)-dimensional solid-on-solid surface with pinning interacting with an impenetrable wall in the delocalization phase. More precisely, let $\gL_N$ be a $N \times N$ box of $\bbZ^2$, and
we consider a nonnegative integer-valued field $(\phi(x))_{x \in \gL_N}$ with zero boundary conditions (\textit{i.e.} $\phi|_{\gL_N^{\complement}}=0 $) associated with the energy functional 
$$ \cV(\phi)= \gb \sum_{x \sim y} \vert \phi(x)-\phi(y) \vert- \sum_{x} h \ind_{\{ \phi(x)=0\}},$$
where $\beta>0$ is the inverse temperature and $h\ge 0$ is the pinning parameter. 
Lacoin has shown that for  sufficiently large $\gb$, there is a phase transition between delocalization and localization at the critical point 
$$h_w(\gb)= \log \left( \frac{e^{4 \gb}}{e^{4 \gb}-1}\right).$$
In this paper we show that for $\beta\ge 1$ and $h \in (0, h_w)$,
the values of $\phi$
concentrate at the height $H=  \lfloor  (4 \gb)^{-1} \log N \rfloor$ with constant order fluctuations. Moreover, at criticality $h=h_w$, we provide evidence for the conjectured typical height $H_w=  \lfloor  (6 \gb)^{-1} \log N \rfloor$.

\end{abstract}

\maketitle


\section{Introduction}\label{sec:intro}
\subsection{Background}
The solid-on-solid (SOS) model, introduced in \cite{burton1951growth,temperley1952statistical}, is
a crystal surface model, which acts as a qualitative approximation of the Ising model in low temperature (see~\cite{caputo2014dynamics} for more details).

\medskip
Now we formally describe the $(d+1)$-dimensional solid-on-solid model on the lattice  $\bbZ^d$. 
Let $\gL_N \colonequals \lint 1, N \rint^d$ denote a box of size $N$ in the lattice $\bbZ^d$
and  we define its external boundary to be
\begin{equation*}
\partial \gL_N \colonequals  \left \{ x \in \bbZ^d \setminus \gL_N: \ \exists y \in \gL_N, x \sim y  \right\}
\end{equation*}
where $x \sim y $ denotes that $x$ and $y$ are nearest neighbors in the lattice $\bbZ^d$. Given $\phi \in  \tilde \gO_{\gL_N} \colonequals \bbZ^{\gL_N}$, we define the Hamiltonian for the solid-on-solid model  with zero boundary condition as
\begin{equation}\label{def:hamiltonian}
\cH_{N}(\phi) \colonequals \sumtwo{\{x,y\} \subset \gL_N}{x \sim y} \vert \phi(x)-\phi(y)\vert+ \sumtwo{x \in \gL_N,\; y \in \partial \gL_N}{x \sim y} \vert \phi(x) \vert.
\end{equation}
Then  for $\gb>0$ (inverse temperature), we define a probability measure on $ \tilde \gO_N=\bbZ^{\Lambda_N}$ as follows
\begin{equation}\label{def:probclassic}
\forall \phi \in \tilde \gO_N, \quad \quad  \bP_{N}^{\gb}(\phi)  \colonequals \frac{1}{\tilde\cZ_{N}^{\gb}} e^{-\gb \cH_N(\phi)}
\end{equation}
where 
\begin{equation*}
\tilde \cZ_{N}^{\gb} \colonequals \sum_{\psi \in \tilde \gO_N} e^{-\gb \cH_N(\psi)} \le  \left( \frac{1+e^{-d\gb}}{1-e^{-d\gb}}  \right)^{\vert \gL_N \vert}
\end{equation*}
and we refer to \cite[Equations (3.8)-(3.10)]{lacoin2018wetting} for a proof of the last inequality.
It is known (see \cite{fisher1984walks,temperley1952statistical,temperley1956combinatorial})
that for any $\gb>0$, the $(1+1)$-dimensional SOS surface is rough (delocalized),  which means the expectation of  the absolute value of the height at the center diverges in the thermodynamic limit. However, for $d \ge 3$, it is shown in~\cite{bricmont1982surface} by Peierls argument that for any $\gb>0$,  the $(d+1)$-dimensional SOS surface is rigid (localized), that is,
the expectation of  the absolute value of the height at the center is uniformly bounded. The  interesting case is $d=2$ which exhibits a phase transition between rough (for small $\gb$, c.f. \cite{frohlich1981transition, frohlich1981abelian,frohlich1983berezinskii}) and rigid (for large $\gb$, c.f. \cite{brandenberger1982decay,gallavotti1973Some}). Moreover, numerical simulations suggest that $\gb_c \approx 0.806$ is where the delocalization/localization  transition occurs~\cite{caputo2014dynamics}. 

\subsection{The $(2+1)$-dimensional  SOS surface above a wall}\label{subsec:nopin}
 The probability distribution of the $(2+1)$-dimensional SOS interface above an impenetrable wall (taking non-negative integer values) is the conditional distribution
\begin{equation}\label{nopin:spaceprob}
 \forall  \phi \in \gO_N \colonequals \left\{\phi \in \tilde \gO_N: \phi \ge 0 \right\}, \quad \quad \bbP_N^{\gb} \left( \phi \right) \colonequals \bP_N^{\gb}\left(\phi \right)/\bP_N^{\gb}\left( \gO_N \right).
\end{equation} 
In \cite{bricmont1986random},  Bricmont, Mellouki, and Fr\"{o}hlich
   showed that for large $\gb$,  the average height $H$ of the surface
 satisfies $$  \frac{1}{C \gb} \log N \le H \le  \frac{C}{\gb} \log N.$$
 Later in \cite{caputo2014dynamics}, Caputo, Lubetzky,  Martinelli, Sly and Toninelli showed that for  $\gb \ge 1$,  
 the typical height of the surface concentrates at
$$ H = \left\lfloor \frac{1}{4 \gb} \log N \right\rfloor $$
with fluctuations of order $O(1)$, where $\lfloor x \rfloor \colonequals \sup\{n \in \bbZ: \ n \le x \}$, as follows.

\begin{theorema}[{\cite[Theorem 3.1]{caputo2014dynamics}}] \label{th:nonpin}
There exist two universal constants $C,K>0$ such that for all $\gb \ge 1$ and all integer $k \ge K $, we have for all $N$,
\begin{equation*}
\bbP_N^{\gb} \left( \vert\{x \in \gL_N: \ \phi(x) \ge H+k \} \vert  > e^{-2 \gb k}N^2\right)\le e^{ -Ce^{-2 \gb k}N \left(1 \wedge e^{-2 \gb k}N \log^{-8} N \right) },
\end{equation*}
and 
\begin{equation*}
\bbP_N^{\gb} \left( \left\vert\{x \in \gL_N: \ \phi(x) \le H-k \} \right\vert  > e^{-2 \gb k}N^2\right) \le e^{ -e^{\gb k}N}.
\end{equation*}

\end{theorema}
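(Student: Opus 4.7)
The plan is to combine a Peierls-type analysis of level contours at heights $H \pm k$ with a block-decomposition argument that upgrades pointwise tail estimates into sharp concentration of the corresponding site-counts. The mechanism forcing the typical height $H \asymp (4\gb)^{-1}\log N$ is entropic repulsion: a uniform upward shift of the surface by one unit costs $O(\gb N)$ of boundary energy (since $\phi|_{\gL_N^c}\equiv 0$ is rigid) but yields a per-site entropic gain from the wall constraint, and balancing these two effects pins the mean height at $H$.

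For the upper-tail bound, I would first establish the pointwise estimate $\bbP_N^{\gb}(\phi(x)\ge H+k)\lesssim e^{-2\gb k}$ via a standard Peierls argument on the level-$(H+k)$ contours. Each connected component of $\{\phi\ge H+k\}$ is surrounded by a contour $\gga$, and shifting $\phi$ down by one unit inside the component preserves the wall constraint (since the interior values are at least $H+k\ge 1$) while reducing the Hamiltonian by $\gb|\gga|$. Summing $e^{-\gb|\gga|}$ over lattice animals enclosing a fixed site, together with the calibration $H=\lfloor(4\gb)^{-1}\log N\rfloor$, yields the pointwise tail, and in particular $\bbE_N^{\gb}|\{x:\phi(x)\ge H+k\}|\lesssim e^{-2\gb k}N^2$. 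The concentration step then proceeds by partitioning $\gL_N$ into blocks of side $\mathrm{poly}(\log N)$, using a monotone coupling to decorrelate the blocks up to a small error, and applying a Chernoff-type bound to the nearly-independent block contributions. The polylogarithmic block side enters the statement as the $\log^{-8} N$ correction, and the two regimes in the outer minimum correspond to the Poissonian-variance and number-of-blocks dominated fluctuation regimes.

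For the lower-tail bound, I would use a push-up comparison: if the set $B_k:=\{x:\phi(x)\le H-k\}$ were too large, I would modify $\phi$ by adding $k$ on a well-chosen macroscopic subset to produce a configuration much closer to the typical profile. The rigid boundary $\phi|_{\gL_N^c}\equiv 0$ forces any such subset to have perimeter of order $N$, so the modification creates a Hamiltonian cost of at least $\gb k\cdot\Omega(N)$; comparing partition functions then yields the super-exponential bound $e^{-e^{\gb k}N}$. The much sharper rate reflects the well-known fact that, in the rigid $(2+1)$-D SOS regime, deep downward fluctuations away from the typical profile are suppressed far more strongly than upward ones.

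I expect the main technical obstacle to lie in the concentration step for the upper tail. The pointwise tail estimate is a routine Peierls calculation, but transforming the first-moment bound $\bbE_N^{\gb}|\{x:\phi(x)\ge H+k\}|\lesssim e^{-2\gb k}N^2$ into sharp exponential concentration with the precise $\log^{-8}N$ correction requires quantitative spatial mixing of $\bbP_N^{\gb}$ at the polylogarithmic scale. Following the block-dynamic approach of Caputo--Lubetzky--Martinelli--Sly--Toninelli, one would couple $\bbP_N^{\gb}$ with a product of local Gibbs measures (with suitable boundary conditions chosen near height $H$), control the residual boundary errors via the exponential decay of correlations above the wall, and carefully track the interplay between the block side, the upper-tail exponent $e^{-2\gb k}$, and the two sparsity regimes appearing in the final probability bound.
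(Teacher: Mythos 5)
First, note that the paper does not prove this statement at all: Theorem~\ref{th:nonpin} is imported verbatim from \cite[Theorem 3.1]{caputo2014dynamics} and used as a black box (combined with the stochastic domination of Lemma~\ref{lema:stochdom}) to deduce Theorem~\ref{th:subcritical}-\eqref{upwardspike}. So your proposal is necessarily compared against the cited proof and against the analogous arguments that this paper does carry out in Sections~\ref{sec:thsub-dw} and~\ref{sec:critical}.

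Your lower-tail mechanism has a genuine gap. An energy cost of order $\gb k\cdot\Omega(N)$ for a push-up modification can only yield a bound of the form $e^{-c\gb kN}$; it cannot produce the doubly-exponential-in-$k$ rate $e^{-e^{\gb k}N}$. The correct mechanism is not a cost but an \emph{entropic gain}, and it is exactly the one reproduced in this paper's Lemma~\ref{lema:downspike}: one lifts the entire surface by one unit (boundary cost $4\gb N$, independent of $k$) and simultaneously sums over all subsets $A\subseteq\phi^{-1}(\ell)$ of the offending level set that are sent down, via an injective multivalued map $(\phi,A)\mapsto V_A\phi$. The sum over $A$ produces the factor $\left(1+e^{-4\gb\ell}\right)^{\vert\phi^{-1}(\ell)\vert}\ge\exp\left(\tfrac12 e^{-4\gb\ell}\vert\phi^{-1}(\ell)\vert\right)$, and with $\ell=H-k$, $e^{-4\gb H}\asymp N^{-1}$ and $\vert\phi^{-1}(\ell)\vert\ge e^{-2\gb k}N^2$ this gain is $\exp\left(\tfrac12 e^{2\gb k}N\right)$, which overwhelms the $e^{4\gb N}$ cost and gives the stated rate. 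Without the sum over subsets (or an equivalent counting of preimages), your partition-function comparison cannot close.

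Your upper-tail outline is consistent in spirit with the cited proof (Peierls estimate for the level-$(H+k)$ contours, then a coarse-graining into polylogarithmic blocks to upgrade the first-moment bound to exponential concentration, with the $\log^{-8}N$ tracking the block scale and the two regimes in the outer minimum). But as written it is a plan rather than a proof: the pointwise Peierls step needs nested contours across all $k$ levels above $H$ together with an a priori control of plateaus (a single downward shift inside one contour does not by itself give the $k$-dependence), and the decorrelation of blocks is precisely the hard technical content of \cite[Theorem 3.1]{caputo2014dynamics}. If your intent is to reprove the theorem rather than cite it, both of these steps must be supplied.
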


This result describes the effect of the impenetrable wall in the large $\beta$ regime, as the surface is pushed up to the height of order $\frac {1}{4\beta}\log N$, instead of remaining uniformly bounded when no wall is present. This effect is often called \emph{entropic repulsion}.  Furthermore, in  \cite{caputo2016SOSScaling}  these authors provided 
 a full description of the macroscopic shape of the SOS surface, including the scaling limit and fluctuations of the rescaled macroscopic level
lines. 
 In particular, they show in \cite[Theorem 1]{caputo2016SOSScaling} that the surface concentrates on two values: $H$ and $H-1$.
Moreover, Gheissari and  Lubetzky \cite{Gheissari2021entropic} proved a phase transition in the occurrence of entropic repulsion when the hard wall is at a negative level.

\subsection{The $(2+1)$-dimensional SOS surface with pinning above a wall}
In this paper, we are interested in the case where the $(2+1)$-dimensional SOS surface above a wall interacts with a pinning (or wetting) attraction to the wall.

More precisely, we model this surface in the box $\gL_N \subset \bbZ^2$ by an element of $\gO_N= \bbZ_+^{\gL_N}$, where $\bbZ_+\colonequals \bbZ \cap [0, \infty)$.
Given $\gb>0$ and $h \ge 0$, we define the  probability measure  for the $(2+1)$-dimensional SOS surface above a wall  with zero boundary conditions and pinning reward~$h$, namely $\bbP_{N}^{\gb,h}$  on $\gO_N$,  by
\begin{equation}\label{eq:prob-def}
\bbP_{N}^{\gb,h}(\phi)  \colonequals \tfrac{1}{\cZ_N^{\gb,h}} e^{-\gb \cH_{N}(\phi)+ h \left| \{ x \in \gL_N:\ \phi(x)=0\}\right|},
\end{equation}
where 
\begin{equation}\label{partfunct}
\cZ_{N}^{\gb,h} \colonequals \sum_{\phi \in \gO_N} e^{-\gb \cH_N(\phi)+ h \vert \{ x \in \gL: \  \phi(x)=0\} \vert} \le e^{ h \vert \gL_N \vert} \left( \frac{1+e^{-2\gb}}{1-e^{-2\gb}}  \right)^{\vert \gL_N \vert}.
\end{equation}
By \cite[Equation (2.9)]{lacoin2018wetting}, we know the existence of the following limit
\begin{equation*}
\tf(\gb,h) \colonequals \lim_{N \to \infty} \frac{1}{N^2} \log  \cZ_{N}^{\gb,h}
\end{equation*}
which is called the free energy.
By H\"{o}lder's inequality, for $\theta \in [0,1]$ we have 
$$\cZ_N^{\gb, \theta h_1+(1-\theta)h_2} \le \left(\cZ_N^{\gb,h_1}\right)^{\theta} \cdot \left(\cZ_N^{\gb,h_2}\right)^{1-\theta}, $$ 
and then $\tf(\gb,h)$ is increasing and convex in $h$ since $\tf(\gb,h)$ is the limit of a sequence of increasing and convex functions  in $h$.
Therefore, at points where $\tf(\gb,h)$ is differentiable in~$h$, the convexity (c.f. \cite[Appendix A.1.1]{giacomin2007polymer}) allows us to exchange the order of limit and derivative to obtain the asymptotic contact fraction
\begin{equation*}
\partial_h \tf(\gb, h)= \lim_{N \to \infty} \frac{1}{N^2} \bbE_{N}^{\gb,h} \left[ \vert \phi^{-1}(0)\vert \right],
\end{equation*} 
where we have used the notation $\phi^{-1}(A) \colonequals \{x \in \gL_N: \ \phi(x)\in A \}$ for $A \subset \bbZ$, and $\phi^{-1}(k) \colonequals \phi^{-1}(\{ k\})$ for $k \in \bbZ$.
 In  \cite{chalker1982pinning}, Chalker showed that there exists a critical value
\begin{equation}\label{def:hw}
h_w(\gb) \colonequals \sup \left\{h \in \bbR_+: \tf(\gb,h)=\tf(\gb,0) \right\}\end{equation}
which is positive for all $\gb>0$, thus separating the delocalized phase $(\partial_h \tf(\gb,h)=0)$ from the localized phase $(\partial_h \tf(\gb,h)>0)$.
We refer to  the surveys \cite{ioffe2016low, velenik2006localization} for a comprehensive bibliography on the subject of localization/delocalization of surface models.
Chalker further showed that for all $\gb>0$,
\begin{equation}\label{lubd:critval}
\log \left( \frac{e^{4 \gb}}{e^{4 \gb}-1} \right) \le h_w(\gb) \le \log \left( \frac{16(e^{4 \gb}+1)}{e^{4 \gb}-1} \right).
\end{equation}
Later, Alexander, Dunlop and  Miracle-Solé \cite{alexander2011layering} showed that the lower bound in \eqref{lubd:critval} is asymptotically sharp, and  when  $h$ decreases to $h_w$ the system undergoes a sequence of layering transitions (\textit{i.e.} the typical height of the surface varies as $h$ decreases to $h_w$).
More recently, Lacoin proved in~\cite[Proposition 5.1]{lacoin2018wetting}  that for $\gb > \gb_1$ (where $\gb_1 \in (\log 2, \log 3)$ is given by \cite[(2.20)]{lacoin2018wetting}), we have 
\begin{equation}
h_w(\gb)= \log \left( \frac{e^{4 \gb}}{e^{4 \gb}-1} \right),
\end{equation}
and there exists a constant $C_{\gb}$ such that
\begin{equation*}
\forall u \in (0,1], \quad C_\beta^{-1}u^3 \le \tf(\gb, u+h_w(\gb))-\tf(\gb, h_w(\gb))\le C_\beta u^3.
\end{equation*}
In fact, this constant $C_\gb$ can be determined more precisely under additional conditions on $\gb$, for which we refer to \cite[Theorem 2.1]{lacoin2018wetting}. Furthermore, when $h>h_w$,
 a complete picture of the typical height, the Gibbs states and regularity of the free energy is provided in \cite{lacoin2020wetting}.

\subsection{Subcritical regime} In this paper, our goal is to describe the typical height of the $(2+1)$-dimensional SOS surface above a wall with pinning parameter $h \in (0, h_w)$.
Our main result is a generalization of Theorem \ref{th:nonpin} to the subcritical pinning regime. We note that
for $h \in (0, h_w)$ we have $e^{-h}+e^{-4 \gb}>1$, and then define for $\delta>0$,
\begin{equation}\label{def:kappa}
  \kappa(\gb, h, \delta) \colonequals \frac{4 \gb + \delta}{\log \left( e^{-h}+e^{-4 \gb} \right)}. 
\end{equation} 

\begin{theorem}\label{th:subcritical} Fix $\gb \ge 1 $,  $h \in (0, h_w)$ and $N\ge 1$. Let $H = \left\lfloor \frac{1}{4 \gb} \log N \right\rfloor$.

\begin{enumerate}[(i)]

\item\label{upwardspike}
There exist two universal constants $C, K>0$ such that for all integer $m \ge K$,
\begin{equation*}
\bbP_N^{\gb,h} \left(\left\vert \phi^{-1}([H+m, \infty)) \right\vert  > e^{-2 \gb m}N^2\right)
\le e^{ -Ce^{-2 \gb m}N \left(1 \wedge e^{-2 \gb m}N \log^{-8} N \right) }.
\end{equation*}

\item\label{downwardspike} For $\delta>0$ and $m\in \mathbb{N}$ we have
\begin{equation*}
\bbP_N^{\gb,h} \left( \left\vert \phi^{-1}([0, H-m]) \right\vert  >2 e^{-2 \gb m}N^2\right) \le 
 3  e^{ - \min \left( \frac{1}{2}e^{2 \gb m} -4 \gb(1+\gk), \; \delta \right) N },
\end{equation*}
where $\gk$ is defined in \eqref{def:kappa}.
\end{enumerate}
\end{theorem}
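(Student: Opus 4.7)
\emph{Part (\ref{upwardspike}).}  The plan is to reduce to the pinning-free Theorem~\ref{th:nonpin}(i) via the FKG inequality. The law $\bbP_N^{\gb}$ on $\gO_N$ satisfies the FKG lattice condition for coordinate-wise order, because the nearest-neighbor interaction $(a,b)\mapsto |a-b|$ is submodular (a brief four-number case analysis gives $|a\wedge c - b\wedge d|+|a\vee c - b\vee d|\le|a-b|+|c-d|$), single-site potentials are modular, and restriction to the cone $\{\phi\ge 0\}$ preserves FKG.  Since the event $\{|\phi^{-1}([H+m,\infty))|>e^{-2\gb m}N^2\}$ is increasing in $\phi$ while the tilt $e^{h|\phi^{-1}(0)|}$ converting $\bbP_N^{\gb}$ into $\bbP_N^{\gb,h}$ is decreasing, applying FKG to this (increasing, decreasing) pair on $\bbP_N^{\gb}$ yields $\bbP_N^{\gb,h}(\{|\phi^{-1}([H+m,\infty))|>e^{-2\gb m}N^2\})\le \bbP_N^{\gb}(\{|\phi^{-1}([H+m,\infty))|>e^{-2\gb m}N^2\})$, and Theorem~\ref{th:nonpin}(i) produces the claim.

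\emph{Part (\ref{downwardspike}), setup.}  FKG now points the wrong way (the downward-spike event is decreasing, so the pinning tilt only raises its probability), so I split by wall-contact density.  With $B=\{|\phi^{-1}(0)|>(1+\gk)N\}$,
$$\bbP_N^{\gb,h}(A^-)\le\bbP_N^{\gb,h}(B)+\bbP_N^{\gb,h}(A^-\cap B^c),$$
and the two terms are estimated differently.  For $\bbP_N^{\gb,h}(B)$, exponential tilting gives $\bbP_N^{\gb,h}(B)\le e^{-s(1+\gk)N}\,\cZ_N^{\gb,h+s}/\cZ_N^{\gb,h}$ for any $s>0$.  I take $s=\log(e^{-h}+e^{-4\gb})$, which is positive on $(0,h_w)$ and places $h+s$ exactly at $h_w$ (since $e^{-h_w}+e^{-4\gb}=1$).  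With the definition~\eqref{def:kappa} of $\gk$ the product $s(1+\gk)$ equals $s+4\gb+\delta$, and a finite-volume subcritical comparison $\cZ_N^{\gb,h+s}/\cZ_N^{\gb,h}\le e^{(s+4\gb)N}$ (to be extracted from the subcritical analysis of~\cite{lacoin2018wetting}) yields $\bbP_N^{\gb,h}(B)\le 2e^{-\delta N}$.

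\emph{Part (\ref{downwardspike}), low-contact case.}  On $B^c$ the pinning factor is deterministically bounded by $e^{h(1+\gk)N}$, so
$$\bbP_N^{\gb,h}(A^-\cap B^c)\le e^{h(1+\gk)N}\cdot\frac{\cZ_N^{\gb,0}}{\cZ_N^{\gb,h}}\cdot\bbP_N^{\gb}(A^-).$$
The elementary lower bound $\cZ_N^{\gb,h}\ge e^{-4\gb N}\cZ_N^{\gb,0}$ follows from restricting the partition sum defining $\cZ_N^{\gb,h}$ to $\phi\ge 1$ (which kills the pinning term) and performing the unit shift $\psi=\phi-1$, which leaves interior bonds unchanged and incurs only the $4\gb N$ boundary cost.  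Combined with a strengthening of Theorem~\ref{th:nonpin}(ii) giving $\bbP_N^{\gb}(A^-)\le e^{-\frac{1}{2} e^{2\gb m}N}$ for the threshold $2 e^{-2\gb m}N^2$ (expected from the contour analysis of~\cite{caputo2014dynamics}), and using $h\le h_w<4\gb$, this produces the rate $\tfrac{1}{2} e^{2\gb m}-4\gb(1+\gk)$ in the exponent.  Summing the two cases gives the prefactor $3$.

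\emph{Main obstacle.}  The key technical point is the linear-precision subcritical partition function comparison $\cZ_N^{\gb,h+s}/\cZ_N^{\gb,h}\le e^{(s+4\gb)N}$: the bare identity $\tf(\gb,\cdot)\equiv\tf(\gb,0)$ on $[0,h_w]$ only yields $e^{o(N^2)}$, which is inadequate for a Chernoff-type bound at rate $\delta N$.  The sharper $e^{O(N)}$ estimate must be drawn from the subcritical techniques of~\cite{lacoin2018wetting}, and then aligned with~\eqref{def:kappa} so that the chosen $s$ produces exactly the rate $\delta$ in the exponent.  A secondary technical point is upgrading the downward-spike rate from $e^{\gb m}$ (as in Theorem~\ref{th:nonpin}(ii)) to $\tfrac{1}{2}e^{2\gb m}$ in the pinning-free setting, which I expect follows from the Peierls/contour arguments of~\cite{caputo2014dynamics} applied at the slightly larger threshold $2e^{-2\gb m}N^2$.
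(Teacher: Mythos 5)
Part (i) of your proposal is correct and is essentially the paper's argument: the paper compares $\bbP_N^{\gb,h}$ with $\bbP_N^{\gb,0}$ via Holley's criterion (after truncating to $\max\phi\le n$ to stay in a finite lattice, a step you should also include since the state space is infinite), whereas you phrase the same comparison as an FKG inequality for the pair (increasing event, decreasing tilt $e^{h|\phi^{-1}(0)|}$). Either way the reduction to Theorem~\ref{th:nonpin}(i) is sound.

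For part (ii), your skeleton (split on whether $|\phi^{-1}(0)|$ is of order $N$, then treat the two pieces separately) matches the paper, but both load-bearing estimates are asserted rather than proved, and they are precisely the content of the proof rather than facts available off the shelf. First, the ``linear-precision'' bound $\cZ_N^{\gb,h+s}/\cZ_N^{\gb,h}\le e^{O(N)}$ with $s=\log(e^{-h}+e^{-4\gb})$ is exactly the exponential-moment inequality $\bbE_N^{\gb,h}\bigl[(e^{-h}+e^{-4\gb})^{|\phi^{-1}(0)|}\bigr]\le e^{4\gb N}$, which the paper proves directly and elementarily: for each $A\subseteq\phi^{-1}(0)$ lift the surface by $1$ off $A$ and keep $A$ at height $0$; the map is injective in $(\phi,A)$, costs at most $4\gb N+4\gb|A|$ in energy and $h|\phi^{-1}(0)\setminus A|$ in lost pinning reward, and summing over $A$ produces the factor $(e^{-h}+e^{-4\gb})^{|\phi^{-1}(0)|}e^{-4\gb N}$. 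You should supply this argument rather than defer it to \cite{lacoin2018wetting}. (Also, your claim that this $s$ places $h+s$ at $h_w$ is false unless $h=h_w$; what matters is only that $s>0$ and $s\gk=4\gb+\delta$, which does give the rate $\delta$.) Second, the strengthening of Theorem~\ref{th:nonpin}(ii) to the rate $\tfrac12 e^{2\gb m}$ is not available as stated (the quoted rate is $e^{\gb k}$), and with only $e^{\gb m}$ your exponent does not match the theorem. The paper does not route through the pinning-free measure here: on the event $\{|\phi^{-1}(0)|\le\gk N\}$ it runs, level by level $\ell\in[1,H-m]$, a second lifting map that fixes the zero set, sends a subset $A\subseteq\phi^{-1}(\ell)$ to height $1$, and lifts everything else by $1$; the gain $(1+e^{-4\gb\ell})^{|\phi^{-1}(\ell)|}\ge\exp(\tfrac12 e^{-4\gb\ell}|\phi^{-1}(\ell)|)$ combined with $e^{-4\gb H}\ge N^{-1}$ is what produces the $\tfrac12 e^{2\gb j}N$ rate, and a union bound over levels finishes. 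Until these two maps (or equivalent estimates) are written out, part (ii) is not proved.
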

We expect that similarly, for $h \in (0,h_w)$, the surface concentrates on two values $H-c(h)$ and $ H-c(h)-1$ where $c(h) \in \bbZ_+$ tends to infinity when $h$ tends to $h_w$.

\subsection{Behavior at criticality}

Next we consider the behavior at critical $h=h_w$,
as defined in~\eqref{def:hw}. Our main result is that 
 the amount of non-isolated zeros is at most of order $N$ with high probability.
 For $\phi \in \gO_N$,  we define its isolated and non-isolated zeros to be respectively
\begin{equation}\label{def:q}
\begin{split}
q_1(\phi) \colonequals \left\{ x \in \gL_N: \ \phi(x)=0, \forall y \in \gL_N, y \sim x, \phi(y) \ge 1\right\},\\
q_{2+}(\phi) \colonequals \left\{ x \in \gL_N: \ \phi(x)=0, \exists y \in \gL_N, y \sim x, \phi(y) =0\right\}.
\end{split}
\end{equation}
We prove the following theorem.
\begin{theorem}\label{th:spikes2}
For $\beta \ge 1$ and $h=h_w$, we have for all $N \in \bbN$ and $C >0$, 
\begin{equation*}
\bbP_N^{\beta, h_w} \left( \phi \in \gO_N: \  \vert q_{2+}(\phi)  \vert \ge CN \right) \le e^{-N\left(\frac{C}{20}e^{-6\beta}-4 \beta \right)}.
\end{equation*}
\end{theorem}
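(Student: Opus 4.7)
The plan is to bound the probability by the ratio
\[
\bbP_N^{\gb,h_w}\bigl(|q_{2+}|\ge CN\bigr) \;=\; \frac{1}{\cZ_N^{\gb,h_w}}\sum_{\phi:\,|q_{2+}(\phi)|\ge CN} w(\phi), \qquad w(\phi):=e^{-\gb\cH_N(\phi)+h_w|\phi^{-1}(0)|},
\]
and to estimate numerator and denominator separately. For the denominator, the constant configuration $\phi\equiv 1$ has no pinning reward and energy equal to the number of edges between $\gL_N$ and $\partial\gL_N$, which is exactly $4N$; hence $\cZ_N^{\gb,h_w}\ge e^{-4\gb N}$, and this accounts for the $4\gb N$ summand in the exponent of the claimed inequality.

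The numerator is controlled via a per-cluster lifting estimate that exploits the critical identity $e^{h_w}(1-e^{-4\gb})=1$. Given $\phi\in\gO_N$ and a maximal connected component $B$ of $q_{2+}(\phi)$ lying in the interior of $\gL_N$, consider the injective lift $\phi\mapsto\phi':=\phi+\ind_B$. By maximality of $B$ as a component of $q_{2+}(\phi)$, every neighbor $y\in\gL_N\setminus B$ of a site in $B$ satisfies $\phi(y)\ge 1$, and a direct energy accounting yields
\[
\frac{w(\phi')}{w(\phi)} = \exp\!\bigl[\gb\,|\partial_{\mathrm{E}} B| - h_w\,|B|\bigr],
\]
where $\partial_{\mathrm{E}} B$ denotes the set of edges from $B$ to $\gL_N\setminus B$. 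In the minimal case $|B|=2$ (an adjacent pair), $|\partial_{\mathrm{E}} B|=6$; combining $\gb\ge 1$ with $h_w=-\log(1-e^{-4\gb})\le 1$ gives $w(\phi')/w(\phi)\ge \tfrac12 e^{6\gb}$. Injectivity of the lift then yields, for any fixed interior pair $\{x,y\}$,
\[
\sum_{\phi:\,\{x,y\}\text{ is a maximal pair-cluster of }\phi^{-1}(0)} w(\phi) \;\le\; 2\,e^{-6\gb}\,\cZ_N^{\gb,h_w}.
\]
For thicker clusters, the isoperimetric inequality $|\partial_{\mathrm{E}} B|\ge 4\sqrt{|B|}$ combined with the smallness of $h_w$ yields an analogous per-non-isolated-zero gain of at least $e^{c\gb}$ for some absolute $c>0$.

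To convert the per-cluster estimate into an exponential-in-$N$ bound on $|q_{2+}|$, I plan to run a greedy packing argument on cluster lifts. Given $\phi$ with $|q_{2+}(\phi)|\ge CN$, extract a sub-family of maximal clusters of $q_{2+}(\phi)$ whose $2$-neighborhoods in $\gL_N$ are pairwise disjoint, so that the associated lift operations decouple and their Boltzmann weight factors multiply. Since each cluster's $2$-neighborhood overlaps at most a bounded number of others (a packing constant of $\bbZ^2$), a greedy selection guarantees the sub-family touches at least a fraction $1/20$ of the non-isolated zeros of $\phi$. Multiplying the per-cluster inequalities and summing over $\phi$ gives
\[
\sum_{\phi:\,|q_{2+}(\phi)|\ge CN} w(\phi) \;\le\; e^{-(C/20)\,e^{-6\gb}\,N}\,\cZ_N^{\gb,h_w},
\]
which combined with the denominator bound yields the theorem.

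The principal obstacle is executing the greedy packing rigorously and uniformly across all cluster geometries: thick clusters (e.g.\ $2\times 2$ blocks and their extensions) have smaller per-site lift ratios and one must verify, via the isoperimetric bound, that the average rate $e^{-6\gb}/20$ still holds; clusters touching $\partial\gL_N$ contribute an additional negative $|\partial_{\mathrm{E}}^{\mathrm{bdy}} B|$ correction that must be absorbed; and a canonical choice of the lifted sub-family must be specified so that the combined map $\phi\mapsto\phi'$ is essentially injective, otherwise its multiplicity has to be folded into the $1/20$ constant.
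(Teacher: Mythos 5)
Your core device --- lifting an entire maximal cluster $B$ of $q_{2+}(\phi)$ by one unit and reading off the weight ratio $\exp\bigl(\gb\vert\partial_{\mathrm E}B\vert-h_w\vert B\vert\bigr)$ --- breaks down for large clusters, and this is a genuine gap rather than a technicality. The pinning loss $h_w\vert B\vert$ scales with the \emph{volume} of the cluster while the energy gain $\gb\vert\partial_{\mathrm E}B\vert$ scales only with its \emph{boundary}, so the claimed ``per-non-isolated-zero gain of at least $e^{c\gb}$'' via the isoperimetric bound $\vert\partial_{\mathrm E}B\vert\ge 4\sqrt{\vert B\vert}$ is false once $\vert B\vert$ exceeds an absolute constant (the per-site exponent is $4\gb/\sqrt{\vert B\vert}-h_w$, which decays to $-h_w$), and the total gain is actually \emph{negative} once $\sqrt{\vert B\vert}\gtrsim \gb/h_w\asymp\gb e^{4\gb}$. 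The extreme case makes this vivid: for $\phi\equiv 0$ one has $\vert q_{2+}\vert=N^2\ge CN$, a single cluster $B=\gL_N$, and your lift produces $\phi'\equiv 1$ with $w(\phi')/w(\phi)=e^{-4\gb N-h_wN^2}\ll 1$, so the argument gives no control whatsoever over exactly the configurations the theorem must exclude. No greedy packing or separation of cluster neighborhoods can repair this, since the problem already occurs for a single cluster.

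The paper's proof circumvents this by not performing one deterministic lift per cluster but by summing over all $2^{\vert q_{2+}(\psi)\vert}$ choices of a subset $A\subseteq q_{2+}(\psi)$ to keep pinned while the rest of the surface is raised; this entropy over subsets factorizes over connected components, and after tiling each component by the bounded patterns of Figure~\ref{fig:4patterns}, Lemma~\ref{lema:smalllwbd} yields a multiplicative gain of $1+\tfrac12 e^{-6\gb}$ \emph{per pattern of at most five vertices}, i.e.\ a per-site rate of order $e^{-6\gb}$ (matching the theorem's exponent $\tfrac{C}{20}e^{-6\gb}$, far smaller than your $e^{c\gb}$) that is valid for clusters of arbitrary size. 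A second ingredient you are missing is the passage to the extended state space $\gO_N^*$ via the critical identity \eqref{replace:isospikes}, which converts the reward $h_w\vert\phi^{-1}(0)\vert$ into a reward for non-isolated zeros only; in the paper's global lift this is what prevents the (possibly order-$N^2$) population of isolated zeros from destroying the estimate. Your per-pair computation (ratio $e^{6\gb-2h_w}$, giving $\sum w\le 2e^{-6\gb}\cZ_N^{\gb,h_w}$ for a fixed interior pair) is correct as far as it goes, but it cannot be bootstrapped to the theorem without an idea playing the role of the subset sum and the pattern decomposition.
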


When $h=h_w$, it is conjectured that the surface height concentrates around the value
\begin{equation}\label{heigh:crit}
H_w 
=
\left\lfloor \frac{1}{6 \gb} \log N \right\rfloor,
\end{equation}
with fluctuations similar to Theorem \ref{th:subcritical} (\cite{lacoin2022per}). 
The intuition for this different typical height is a balance at criticality between entropic cost of lifting the surface up and the reward for isolated zeros. 
Theorem~\ref{th:spikes2} indicates that non-isolated zeros should not contribute to this balance.

Our last result gives further evidence for the conjecture. We show that the probability of downwards fluctuations from the conjectured typical height $H_w$ is very small, if the amount of zeros is at most of order $N^{4/3}$.
\begin{proposition}\label{lemacrit:downspike}
 For all $\gb \ge 1$, $C>0$, $h=h_w$, $N\in \bbN$ and $m\in\bbN$, letting $H_w= \lfloor \frac{1}{6 \beta} \log N \rfloor$ we have
\begin{equation*}
\begin{aligned}
\bbP_N^{\gb,h_w} & \left( \left\{ \vert \phi^{-1}(0)\vert \le C N^{\frac{4}{3}} \right\}  \bigcap \left\{ \left\vert \phi^{-1}([1,H_w-m])\right\vert \ge  2 e^{-2 \gb m} N^2 \right\} \right)\\
 &\le  
 2 \exp \left( 4 \gb N +4  \gb  C N^{\frac{4}{3}}-\tfrac{1}{2}e^{2 \gb m}N^{\frac{4}{3}}\right).
\end{aligned}
\end{equation*}
\end{proposition}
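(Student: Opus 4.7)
The plan is a shift argument analogous to the proof of Theorem~\ref{th:subcritical}(ii), adapted to the critical pinning regime. Write $A$ for the event on the left-hand side. I first introduce the injection $T\colon \gO_N \to \gO_N$ defined by $T\phi(x) = \phi(x) + \mathbf{1}_{\{\phi(x) \ge 1\}}$, which raises every non-zero height by one while leaving zeros in place; its image is contained in $\{\psi \in \gO_N : \psi^{-1}(1) = \emptyset\}$. Since $T$ preserves the zero set, the pinning reward is unchanged, and $\cH_N$ increases only along edges between zero and non-zero sites (internal bonds as well as bonds joining $\gL_N$ to $\partial \gL_N$), each contributing exactly $+1$. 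Bounding the number of such edges by $4|\phi^{-1}(0)| + 4N$, on the event $A$ one has $\cH_N(T\phi)-\cH_N(\phi)\le 4CN^{4/3}+4N$, so that $\bbP_N^{\gb,h_w}(\phi) \le \bbP_N^{\gb,h_w}(T\phi)\cdot\exp\bigl(4\gb N + 4\gb CN^{4/3}\bigr)$. Summing over $\phi \in A$ and using the injectivity of $T$ gives
\[
\bbP_N^{\gb,h_w}(A) \le \exp\bigl(4\gb N + 4\gb CN^{4/3}\bigr)\cdot \bbP_N^{\gb,h_w}(T(A)).
\]

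Next, the image $T(A)$ consists of configurations $\psi$ satisfying $\psi^{-1}(1) = \emptyset$, $|\psi^{-1}(0)| \le CN^{4/3}$, and $|\psi^{-1}([2, H_w - m + 1])| \ge 2e^{-2\gb m}N^2$. The remaining task is to show
\[
\bbP_N^{\gb,h_w}(T(A)) \le 2\exp\bigl(-\tfrac{1}{2}e^{2\gb m}N^{4/3}\bigr),
\]
which combined with the previous display yields the proposition. Via the tilting identity $\bbP_N^{\gb,h_w}(\psi) = e^{h_w|\psi^{-1}(0)|}\bbP_N^{\gb,0}(\psi)\cdot\cZ_N^{\gb,0}/\cZ_N^{\gb,h_w}$, together with the inequality $\cZ_N^{\gb,h_w} \ge \cZ_N^{\gb,0}$ and the crude upper bound $e^{h_w|\psi^{-1}(0)|} \le e^{h_w CN^{4/3}}$, I would reduce this to a downward-spike estimate on $\{|\psi^{-1}([0, H_w - m + 1])| \ge 2 e^{-2\gb m} N^2\}$ under the no-pinning wetting measure $\bbP_N^{\gb,0}$, to which a suitable refinement of Theorem~\ref{th:nonpin} can be applied.

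The hard part is sharpening the downward-spike estimate enough to produce an exponent of order $e^{2\gb m}N^{4/3}$. A direct application of Theorem~\ref{th:nonpin} (choosing $k = H - H_w + m$ with $H = \lfloor\log N/(4\gb)\rfloor$) yields only an exponent of order $e^{\gb m}N^{13/12}$, which is strictly weaker than what is needed. The required improvement should come from exploiting the structural rigidity of configurations in $T(A)$ inherited from the shift: the constraint $\psi^{-1}(1) = \emptyset$ forces every bond from a zero site to a non-zero neighbor to carry height-difference at least $2$, which amplifies the Peierls cost of the level-$\tfrac{1}{2}$ contour and thus produces the missing $N^{1/4}\cdot e^{\gb m}$ factor in the exponent. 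The factor of $2$ in front of the exponential presumably arises from a union bound that splits $T(A)$ into two complementary sub-events in this contour analysis (for instance, according to whether a single connected component of the zero set carries most of the Peierls cost).
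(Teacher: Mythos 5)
Your opening step (the injection $T=V_\emptyset$, which lifts all non-zero sites by one at cost $\exp(4\gb N+4\gb CN^{4/3})$) is sound, but the heart of the proof is the bound on $\bbP_N^{\gb,h_w}(T(A))$, and there you have a genuine gap. You correctly compute that Theorem~\ref{th:nonpin} only yields an exponent of order $e^{\gb m}N^{13/12}$, which is too weak to absorb the $e^{O(N^{4/3})}$ prefactor, and the proposed rescue --- a ``Peierls amplification'' coming from the constraint $\psi^{-1}(1)=\emptyset$ --- is not an argument: you give no mechanism by which a doubled gradient across the zero boundary would upgrade the exponent from $N^{13/12}$ to $N^{4/3}$, and the two quantities live at different polynomial orders in $N$, so no constant-factor improvement of the Peierls cost can close the gap. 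Your explanation of the factor $2$ is likewise a guess with no supporting computation.

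The missing idea is the \emph{multivalued} (entropy) version of the shift, exactly as in Lemma~\ref{lema:downspike}: fix a level $\ell\in[1,H_w-m]$ and sum over \emph{all} subsets $A\subseteq\phi^{-1}(\ell)$ of the maps $V_A$ from \eqref{mapphi}, which send $A$ to height $1$ at an energy cost $4\gb\ell$ per site while preserving the zero set. Injectivity of $(\phi,A)\mapsto V_A\phi$ then gives
\begin{equation*}
1\;\ge\;\sum_{A\subseteq\phi^{-1}(\ell)}\bbP_N^{\gb,h_w}(V_A\phi)\;\ge\;\bbP_N^{\gb,h_w}(\phi)\,e^{-4\gb N-4\gb|\phi^{-1}(0)|}\left(1+e^{-4\gb\ell}\right)^{|\phi^{-1}(\ell)|},
\end{equation*}
and the exponent $\tfrac12 e^{2\gb m}N^{4/3}$ emerges precisely from $e^{-4\gb\ell}=e^{-4\gb H_w}e^{4\gb(H_w-\ell)}\ge N^{-2/3}e^{4\gb(H_w-\ell)}$ multiplied by the level-set size $|\phi^{-1}(\ell)|\ge e^{-2\gb(H_w-\ell)}N^2$ --- this is where the choice $H_w=\lfloor\tfrac{1}{6\gb}\log N\rfloor$ enters, and it is invisible to any single-injection energy argument. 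The event in the statement is then covered by the union over $j=m,\dots,H_w-1$ of $\{|\phi^{-1}(H_w-j)|\ge e^{-2\gb j}N^2\}$, and the resulting sum of super-exponentially decaying terms is bounded by $(1-e^{-\gb N^{4/3}})^{-1}\le 2$ times its first term, which is the true origin of the factor $2$. You reference the ``shift argument of Theorem~\ref{th:subcritical}(ii)'' at the outset but then use only the $A=\emptyset$ member of the family, discarding exactly the combinatorial gain that makes the proof work.
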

As a consequence of  Theorem \ref{th:spikes2} and Proposition \ref{lemacrit:downspike}, it is enough to prove that for large enough $C>0$, we have
\begin{equation}\label{eq: q1-conj}
\bbP_N^{\gb, h_w} \left( \vert q_1(\phi)\vert>CN^{4/3} \right) = o(1), \quad N\to\infty
\end{equation}
in order to obtain a lower bound on the typical height of the surface at criticality, matching the conjectured height in \eqref{heigh:crit}.

\subsection{Open problems and heuristic arguments}
\emph{Subcritical regime.} Theorem~\ref{th:subcritical} opens the door to more advanced questions about the structure of the surface when $h\in (0,h_w)$. Shape results similar to~\cite{caputo2016SOSScaling} are expected to hold.
In particular, we expect that for $h \in (0,h_w)$, the surface concentrates on two values $H-c(h)$ and $ H-c(h)-1$, where $c(h) \in \bbZ_+$ tends to infinity when $h$ tends to $h_w$.

\emph{At criticality.} 
The behavior of the surface at $h=h_w$ remains to be analysed. In particular, proving an analogous result to Theorem~\ref{th:subcritical} with typical height as in~\eqref{heigh:crit} is of interest, as are more advanced shape results (similar to \cite[Theorems 1 and 2]{caputo2016SOSScaling}). As mentioned earlier, our results would imply a lower bound matching this typical height, provided that~\eqref{eq: q1-conj} holds. We turn to offer heuristic arguments for the conjectured estimates~\eqref{heigh:crit} regarding the typical height and ~\eqref{eq: q1-conj} regarding the number of isolated zeros.

For large $\beta$ (low temperature), neighboring vertices tend to have close values. Thus the surface is expected to be nearly flat - most of the vertices will take a certain value~$H_w$, up to the inner boundary. Nonetheless, some vertices will take the value zero (even without pinning, as proved in~\cite{bricmont1986random}).
Due to a spatial mixing property~\cite[Equation (3.13)]{lacoin2018wetting}, these zeros have nearly i.i.d. behavior, with a probability of $e^{-4\beta H_w}$ for each vertex to be a zero (stemming from having $4$ neighbors at height $H_w$, see~\eqref{eq:prob-def}). At criticality, the pinning reward is not influenced at all by isolated zeroes (see~\eqref{def:tP}), and the main contribution is therefore from pairs of neighboring zeros, whose amount is typically $N^2 e^{-6\gb H_w}$. The main penalty to the Hamiltonian~\eqref{def:hamiltonian} is from lifting the inner boundary up to $H_w$ while the outer boundary remains at $0$, thus contributing $e^{-4 \gb N H_w}$ to the computation of probability in~\eqref{eq:prob-def}. 
The value of $H_w$ is one which balances these two factors, that is,
\begin{equation*}
 \exp\left(h_w N^2 e^{-6\gb H_w} \right) e^{-4 \gb N H_w} \asymp 1,
\end{equation*}
which implies $H_w=\lfloor \frac{1}{6 \gb} \log N \rfloor$ as in~\eqref{heigh:crit}. Now, by the i.i.d. model for zeros, 
the mean number of isolated zeros is 
\begin{equation*}
N^2 e^{-4 \gb H_w}=N^{4/3},
\end{equation*}
which implies~\eqref{eq: q1-conj}.

\subsection{Outline of the paper} The paper is organized as follows. Section \ref{sec:thsub-up} is devoted to  Theorem~\ref{th:subcritical}-\eqref{upwardspike} about upward fluctuations in the subcritical regime. 
Section \ref{sec:thsub-dw} is about  Theorem~\ref{th:subcritical}-\eqref{downwardspike} concerning downward fluctuations in the subcritical regime. 
In Section \ref{sec:critical}, we prove Theorem \ref{th:spikes2} and Proposition \ref{lemacrit:downspike} at criticality.

\section{Theorem~\ref{th:subcritical}-(1): upward fluctuations for $h \in (0, h_w)$}\label{sec:thsub-up} 
Intuitively, the height of the $(2+1)$-dimensional SOS surface above a wall with pinning (\textit{i.e.} $h  \ge 0$) is stochastically dominated by that without pinning (\textit{i.e.} $h= 0$). We use this comparison between $\bbP_N^{\gb,h}$ and $\bbP_N^{\gb,0}$ to prove part \eqref{upwardspike},  where $\bbP_N^{\gb,0}=\bbP_N^{\gb}$ is defined in Subsection \ref{subsec:nopin}.

\subsection{Partial order and stochastic domination} We define a partial order "$\le $" on $\gO_N \times \gO_N$ as follows
\begin{equation*}
\phi \le \psi  \quad \Leftrightarrow  \quad \forall x \in \gL_N, \text{ } \phi(x) \le \psi(x).
\end{equation*}
Moreover, a function $f: \gO_N \mapsto \bbR$ is increasing if 
\begin{equation*}
\phi \le \psi  \quad \Rightarrow \quad  f(\phi) \le f(\psi). 
\end{equation*}
Similarly, an event $\cA \subset \gO_N$ is increasing if its indicator function $\ind_{\cA}$ is increasing.
For two probability measures $\mu_1, \mu_2$ on $\gO_N$, we say that $\mu_2$ dominates $\mu_1$, denoted by $\mu_1 \preceq \mu_2$, if for any bounded increasing function $f: \gO_N \mapsto \bbR$, we have
\begin{equation*}
\mu_1(f) \le \mu_2(f).
\end{equation*}

\begin{lemma} \label{lema:stochdom}
For all $\gb>0$ and $ 0 \le h_1 \le h_2$,  we have
\begin{equation}\label{unpindom}
\bbP_N^{\gb,h_2} \preceq \bbP_N^{\gb,h_1}.
\end{equation}
\end{lemma}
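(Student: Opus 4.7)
The plan is to apply Holley's criterion for stochastic domination. Since the state space $\gO_N = \bbZ_+^{\gL_N}$ is infinite, I would first restrict to the truncated space $\{0,1,\dots,M\}^{\gL_N}$ with the corresponding conditional measures — for which Holley's criterion applies directly on a finite distributive lattice — and then let $M \to \infty$, using that $\bbP_N^{\gb,h}(\phi \le M) \to 1$ (ensured by the Hamiltonian's penalty on large heights) to pass the domination through the weak limit. Holley's criterion then reduces the desired claim $\bbP_N^{\gb,h_2} \preceq \bbP_N^{\gb,h_1}$ to showing, for every pair $\phi,\psi \in \gO_N$, the inequality
\begin{equation*}
\bbP_N^{\gb,h_2}(\phi \wedge \psi)\,\bbP_N^{\gb,h_1}(\phi \vee \psi) \ \ge\ \bbP_N^{\gb,h_2}(\phi)\,\bbP_N^{\gb,h_1}(\psi).
\end{equation*}
The product $\cZ_N^{\gb,h_1}\cZ_N^{\gb,h_2}$ cancels between the two sides, so the claim is equivalent to non-negativity of the corresponding exponent.

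For the Hamiltonian contribution, the aim is the submodularity identity $\cH_N(\phi\vee\psi)+\cH_N(\phi\wedge\psi) \le \cH_N(\phi)+\cH_N(\psi)$, verified edge by edge. For an interior edge $\{x,y\}$, this reduces to the elementary inequality
\begin{equation*}
|\max(a,c)-\max(b,d)|+|\min(a,c)-\min(b,d)| \ \le\ |a-b|+|c-d|,
\end{equation*}
which follows by straightforward case analysis on the relative orders of $a=\phi(x),b=\phi(y),c=\psi(x),d=\psi(y)$. For a boundary edge, the contribution vanishes identically, since $\phi,\psi \ge 0$ gives $(\phi\vee\psi)(x)+(\phi\wedge\psi)(x)=\phi(x)+\psi(x)$ and hence equality of absolute values on both sides. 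Consequently, after multiplication by $-\gb$, the Hamiltonian part of the exponent is $\ge 0$.

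The pinning contribution is handled by a site-by-site analysis, which is the step where the assumption $h_1 \le h_2$ is used. Writing $a=\phi(x),\ c=\psi(x)$, the identities $\ind_{\max(a,c)=0}=\ind_{a=0,c=0}$ and $\ind_{\min(a,c)=0}=\ind_{a=0}+\ind_{c=0}-\ind_{a=0,c=0}$ collapse the net pinning contribution at $x$ to $(h_2-h_1)\,\ind_{\{a>0,\,c=0\}}$, which is non-negative since $h_2 \ge h_1$. The crucial point — and the only substantive one in the proof — is the asymmetric pairing dictated by Holley's criterion: the reward $h_1$ is paired with $\phi\vee\psi$ and $h_2$ with $\phi\wedge\psi$, so upgrading the pinning reward from $h_1$ to $h_2$ precisely at sites newly at zero under $\wedge$ contributes favorably. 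Combining the two non-negative contributions establishes Holley's inequality at each truncation level and, upon taking $M\to\infty$, yields the lemma.
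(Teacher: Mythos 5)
Your proof is correct and follows essentially the same route as the paper: truncate to a finite distributive lattice, verify Holley's condition, and pass to the limit. The paper leaves the verification of Holley's condition to the reader, whereas you carry it out explicitly, correctly isolating the submodularity of the Hamiltonian (with equality on boundary terms) and the net pinning contribution $(h_2-h_1)\,\ind_{\{\phi(x)>0,\ \psi(x)=0\}} \ge 0$.
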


\begin{proof}
Since \cite[Theorem 6]{holley1974remarks} is applied for finite distributive lattice, we set $$\cA_n\colonequals \left\{\phi \in \gO_N: \max_{x \in \gL_N} \phi(x) \le n \right\}.$$
 It is fundamental to  verify Holley's condition \cite[Equation (7)]{holley1974remarks} to obtain
\begin{equation*}
\bbP_N^{\gb,h_2} \left(\cdot \ \vert \  \cA_n\right) \preceq \bbP_N^{\gb,h_1} \left(  \cdot \ \vert \ \cA_n \right),
\end{equation*}
and then for any bounded increasing function $f: \gO_N \mapsto \bbR$ we have
\begin{equation} \label{stochdom}
 \frac{\bbE_N^{\gb,h_2}[f \ind_{\cA_n}]}{\bbP_N^{\gb,h_2}(\cA_n)} \le \frac{ \bbE_N^{\gb,h_1}[f \ind_{\cA_n}]}{\bbP_N^{\gb,h_1}(\cA_n)}. 
\end{equation}
Moreover, by the dominate convergence theorem, for all $h \ge 0$ we have 
\begin{equation}\label{dom:approx}
\bbE_N^{\gb,h} \left[ f \right]
= \lim_{n \to \infty} \frac{\bbE_N^{\gb,h}[f \ind_{\cA_n}]}{\bbP_N^{\gb,h}(\cA_n)}.
\end{equation}
Combining \eqref{stochdom} and \eqref{dom:approx}, we conclude the proof.
\end{proof}

\subsection{Proof of Theorem ~\ref{th:subcritical}-\eqref{upwardspike}. } %
Note that for any integer $m$,
 the event $$\left\{ \phi \in \gO_N: \ \vert\{x \in \gL_N: \ \phi(x) \ge H+m \} \vert  > e^{-2 \gb m}N^2 \right\}$$
is increasing. We combine Lemma \ref{lema:stochdom} and Theorem \ref{th:nonpin} to conclude the proof.
\qed

\section{Theorem~\ref{th:subcritical}-(ii): downward fluctuations for $h \in (0, h_w)$}\label{sec:thsub-dw} 

To prove part~\eqref{downwardspike} of Theorem~\ref{th:subcritical},
 we first show that $\vert \phi^{-1}(0)\vert$ is at most of order $N$, with high probability, adopting the strategy in \cite[Theorem 3.1]{caputo2014dynamics}. 

\begin{lemma} \label{lema:pinorder}
For all $\gb \ge 1$, $h \in [0, h_w)$, $\delta>0$ and $N \ge 1$, we have
\begin{equation*}
\bbP_N^{\gb,h} \left( \vert \phi^{-1}(0)\vert  \ge \gk N \right) \le e^{-\delta N},
\end{equation*}
where $\gk= \kappa(\gb, h, \delta)$ is defined in \eqref{def:kappa}.
\end{lemma}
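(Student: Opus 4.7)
The plan is to apply the exponential Chebyshev inequality: for any $\lambda>0$,
\begin{equation*}
\bbP_N^{\gb,h}\bigl(|\phi^{-1}(0)|\ge \gk N\bigr)
\le e^{-\lambda \gk N}\, \bbE_N^{\gb,h}\bigl[e^{\lambda|\phi^{-1}(0)|}\bigr]
= e^{-\lambda \gk N}\, \frac{\cZ_N^{\gb,h+\lambda}}{\cZ_N^{\gb,h}}.
\end{equation*}
The key choice is $\lambda \colonequals \log(e^{-h}+e^{-4\gb})$, which is strictly positive because $h<h_w$ gives $e^{-h}+e^{-4\gb}>1$. A short algebraic check (using Lacoin's identity $h_w=-\log(1-e^{-4\gb})$, valid for $\gb\ge 1$) confirms $h+\lambda<h_w$, so the tilted partition function $\cZ_N^{\gb,h+\lambda}$ is still in the subcritical regime. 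Moreover, by the very definition of $\gk$ in \eqref{def:kappa}, one has $\lambda\gk=4\gb+\delta$, reducing the proof to the key partition-function-ratio estimate
\begin{equation*}
\frac{\cZ_N^{\gb,h+\lambda}}{\cZ_N^{\gb,h}} \le e^{4\gb N}.
\end{equation*}

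My approach to this ratio bound rests on two ingredients. First, a Chalker-type site-by-site expansion of the pinning factor: using the algebraic identity $e^{h+\lambda}=1+e^{h-4\gb}$ arising from the choice of $\lambda$, I expand $e^{(h+\lambda)\ind_{\{\phi(x)=0\}}}=1+e^{h-4\gb}\ind_{\{\phi(x)=0\}}$ at each vertex and interchange sums to obtain
\begin{equation*}
\cZ_N^{\gb,h+\lambda} \;=\; \sum_{S\subset \gL_N} e^{(h-4\gb)|S|}\, \widehat\cZ_{S}^{\gb},\qquad \widehat\cZ_{S}^{\gb} \colonequals \sum_{\substack{\phi \in \gO_N\\ \phi|_S \equiv 0}} e^{-\gb \cH_N(\phi)},
\end{equation*}
where each $\widehat\cZ_S^{\gb}$ is an SOS-above-wall partition function on $S^c\cap\gL_N$ with zero boundary condition on $S\cup\partial\gL_N$ and is controlled by \eqref{partfunct}. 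Second, a lift-by-one injection $\phi\mapsto \phi+\ind_{\gL_N}$ that sends $\gO_N$ bijectively onto $\{\phi\ge 1\text{ on }\gL_N\}\subset\gO_N$: this map increases the Hamiltonian by exactly $4N$ (the number of edges between $\gL_N$ and $\partial\gL_N$) and annihilates the pinning reward, producing the lower bound $\cZ_N^{\gb,h} \ge e^{-4\gb N}\, \cZ_N^{\gb,0}$. Combining these two ingredients should extract the boundary factor $e^{4\gb N}$, and plugging back into exponential Chebyshev yields $\bbP_N^{\gb,h}(|\phi^{-1}(0)|\ge \gk N)\le e^{-(4\gb+\delta)N+4\gb N}=e^{-\delta N}$.

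The main obstacle I foresee is producing the clean $e^{4\gb N}$ factor without a spurious bulk correction of order $e^{cN^2}$: applying \eqref{partfunct} naively to each $\widehat\cZ_S^{\gb}$ gives an exponential-in-$|S^c|$ correction, so the argument has to use the subcriticality $h+\lambda<h_w$ (which guarantees that the bulk free energies of numerator and denominator coincide) together with the shift injection, so that only the $O(N)$ boundary contribution survives. Quantifying this cancellation uniformly in $N\ge 1$, while respecting the specific combinatorial structure of the pinning weight factor $e^{h-4\gb}$ arising from the tuned choice of $\lambda$, is the real technical heart of the proof.
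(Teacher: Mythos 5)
Your reduction is correct and, in fact, secretly identical to the paper's: choosing $\lambda=\log(e^{-h}+e^{-4\gb})$ so that $\lambda\gk=4\gb+\delta$, exponential Chebyshev reduces the lemma to the single estimate
\begin{equation*}
\bbE_N^{\gb,h}\bigl[e^{\lambda|\phi^{-1}(0)|}\bigr]=\frac{\cZ_N^{\gb,h+\lambda}}{\cZ_N^{\gb,h}}\le e^{4\gb N},
\end{equation*}
and your algebra (including $h+\lambda<h_w$ and $e^{h+\lambda}=1+e^{h-4\gb}$) checks out. The problem is that you do not prove this estimate; you explicitly defer "the real technical heart of the proof," and the two ingredients you propose do not combine to yield it. The Chalker expansion gives $\cZ_N^{\gb,h+\lambda}=\sum_{S}e^{(h-4\gb)|S|}\widehat\cZ_S^{\gb}$ and, by the same expansion, $\cZ_N^{\gb,h}=\sum_{S}(e^h-1)^{|S|}\widehat\cZ_S^{\gb}$; but a term-by-term comparison fails because $e^{h-4\gb}/(e^h-1)$ can be large (it blows up as $h\downarrow 0$) and $|S|$ can be of order $N^2$, which is exactly the $e^{cN^2}$ bulk correction you worry about. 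The lift-by-one injection only gives $\cZ_N^{\gb,h}\ge e^{-4\gb N}\cZ_N^{\gb,0}$, which points the wrong way (one still needs $\cZ_N^{\gb,h+\lambda}\le\cZ_N^{\gb,0}$ up to surface order, and equality of bulk free energies in the delocalized phase controls only the $N^2$-order term, not the $O(N)$ correction you need). So the gap is genuine.

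The missing step is supplied by a refinement of your lift-by-one map: for each $\phi$ and each $A\subseteq\phi^{-1}(0)$, lift all sites \emph{outside} $A$ by one and keep $A$ at zero. This costs at most $e^{-4\gb N-4\gb|A|}$ in the Gibbs weight while the pinning reward drops by $h|\phi^{-1}(0)\setminus A|$, and the images $(U_A\phi)$ over all pairs $(\phi,A)$ are pairwise distinct. Summing over $A\subseteq\phi^{-1}(0)$ produces exactly the factor $e^{-h|\phi^{-1}(0)|}(1+e^{-(4\gb-h)})^{|\phi^{-1}(0)|}=e^{\lambda|\phi^{-1}(0)|}$ with your $\lambda$, and comparing total mass with $1$ (equivalently, with $\cZ_N^{\gb,h}$) gives precisely $\bbE_N^{\gb,h}[e^{\lambda|\phi^{-1}(0)|}]\le e^{4\gb N}$. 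This is the paper's argument; with it your Chebyshev step closes the proof, but without it (or an equivalent surface-order comparison of partition functions) the proposal is incomplete.
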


\begin{proof}
For $\phi \in \gO_N$ and each $A \subseteq  \phi^{-1}(0)$, we define  $U_A \phi: \gL_N \mapsto \bbZ_+$ as follows
\begin{equation*}
(U_A \phi)(x) \colonequals
\begin{cases}
\phi(x)+1, & \text{ if } x \not\in A,\\
0, & \text{ if } x \in A. 
\end{cases}
\end{equation*}
Since the action $U_A$ increases the height of each site in $\gL_N \setminus A$ by one, we have
\begin{equation}
\begin{gathered}
\cH_N(U_A \phi) \le \cH_N(\phi)+4 \vert A \vert+4N,\\
\vert \phi^{-1}(0) \vert- \vert (U_A \phi)^{-1}(0) \vert= \vert \phi^{-1}(0) \setminus A\vert.
\end{gathered}
\end{equation} 
Therefore,
\begin{equation*}
\bbP_N^{\gb,h} \left( U_A \phi \right) \ge \bbP_N^{\gb,h} \left( \phi \right) \cdot \exp \left( -h \vert  \phi^{-1}(0) \setminus A\vert- 4 \gb \vert A \vert -4\gb N  \right),
\end{equation*}
and then
\begin{equation}
\begin{aligned} 
\label{entsum}
\sum_{A \subseteq \phi^{-1}(0)} \bbP_N^{\gb,h} \left( U_A \phi \right) & \ge e^{-4\gb N } \cdot \bbP_N^{\gb,h} \left( \phi \right)  \sum_{A \subseteq \phi^{-1}(0)} \exp \left( -h \vert \phi^{-1}(0) \setminus A\vert- 4 \gb \vert A \vert \right) \\
& =e^{-4 \gb N - h \vert \phi^{-1}(0) \vert} \cdot \bbP_N^{\gb,h} \left( \phi \right) \sum_{n=0}^{\vert \phi^{-1}(0) \vert}   \sumtwo{A \subseteq \phi^{-1}(0)}{\vert A \vert =n} \exp\left( -n(4 \gb-h ) \right)\\
& =e^{-4 \gb N - h \vert \phi^{-1}(0)\vert}  \left(1+e^{-(4\gb-h)}\right)^{\vert \phi^{-1}
(0) \vert} \bbP_N^{\gb,h} \left( \phi \right).
\end{aligned} 
\end{equation}
 Observe that for $A, A' \subseteq \phi^{-1}(0)$ with $A \neq A'$, we have
$$ U_A \phi \neq U_{A'}\phi .$$
Furthermore, for $\phi \neq \psi$,  if $A \subseteq \phi^{-1}(0)$ and $B \subseteq \psi^{-1}(0)$, we have
\begin{equation*}
U_A \phi \neq U_B \psi,
\end{equation*}
because we can recover $A$ from $U_A \phi$ by zero-value sites and then proceed to recover $\phi$. Therefore
$\sum_{\phi \in \Omega_N} \ \sum_{A \subset \phi^{-1}(0)} \bbP_N^{\gb,h}(U_A \phi) \le 1$.
In particular, using \eqref{entsum} we obtain
\begin{equation}
\begin{aligned} \label{upbd:pinlinear}
1 &\ge \sum_{\phi: \ \vert \phi^{-1}(0)\vert \ge \gk N} \ \sum_{A \subset \phi^{-1}(0)} \bbP_N^{\gb,h}(U_A \phi)\\
&\ge  \sum_{\phi: \ \vert \phi^{-1}(0) \vert \ge \gk N} e^{-4\gb N - h \vert \phi^{-1}(0) \vert}  \left(1+e^{-(4\gb-h)}\right)^{\vert \phi^{-1}(0) \vert} \bbP_N^{\gb,h} \left( \phi \right)\\
&\ge  e^{-4 \gb N}  \left(e^{-h}+e^{-4\gb}\right)^{ \gk N} \bbP_N^{\gb,h} \left(   \vert \phi^{-1}(0) \vert \ge \gk N \right)
\end{aligned} 
\end{equation}
where in the last inequality we have used that $e^{-h}+e^{-4\gb} > 1$ for $h \in [0, h_w)$. 
By the definition of $\kappa$ in \eqref{def:kappa} we have
 $$ \left( e^{-h}+e^{-4\gb} \right)^{\gk} e^{-4 \gb}=e^{\delta}.$$
Plugging this into \eqref{upbd:pinlinear}, we conclude the proof of Lemma~\ref{lema:pinorder}.
\end{proof}

\begin{lemma}\label{lema:downspike}
 Let $\gb \ge 1$, $h \in [0,h_w)$ and $\kappa>0$. Then for all $m > \lceil \frac{1}{2 \gb} \log \left( 8 \gb(1+\gk) \right) \rceil$ and $N \ge 1$ we have

\begin{align*}
&\bbP_N^{\gb,h} \left( \Big\{ \vert \phi^{-1}(0)\vert \le \gk N \Big\}  \bigcap \left\{ \left\vert \phi^{-1}([1,H-m])\right\vert \ge \frac{ e^{-2 \gb m}}{1-e^{-2 \gb}} N^2 \right\} \right)\\
 \le &\frac{1}{1-e^{-\gb N}} e^{ - \left( \frac{1}{2}e^{2 \gb m} -4 \gb(1+\gk)\right) N }.
\end{align*}
\end{lemma}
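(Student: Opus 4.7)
The plan is to carry out an entropy--shift argument in the same spirit as the proof of Lemma~\ref{lema:pinorder}, but now targeting the ``low'' set $L(\phi) := \phi^{-1}([1, H-m])$ instead of the zero set. The conditioning $\{|\phi^{-1}(0)|\le \kappa N\}$ is what makes this comparison clean: on that event the pinning reward $h|\phi^{-1}(0)|$ stays uniformly bounded, so any swap between configurations costs at most a multiplicative $e^{h\kappa N}$ in the weight, which is consistent with the $4\gb(1+\kappa)N$ term in the target exponent.

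For each $\phi$ in the event I would introduce a family of modified configurations $\{T_A\phi\}_{A\subseteq L(\phi)}$ through an operator $T_A$ designed so that (i) the zero set $\phi^{-1}(0)$ is preserved (hence the pinning contribution is unchanged), and (ii) the heights on $A$ are lifted just enough to leave the range $[1, H-m]$. A natural first candidate is $(T_A\phi)(x) = \phi(x) + (m+1)\ind_{x\in A}$: only edges with exactly one endpoint in $A$ (internal or boundary) can contribute to $\cH(T_A\phi)-\cH(\phi)$, and each contributes at most $m+1$, giving the comparison
$$\bbP_N^{\gb,h}(T_A\phi) \ge \bbP_N^{\gb,h}(\phi)\,\exp\bigl(-4\gb(m+1)(|A|+N)\bigr).$$
Summing over $A \subseteq L(\phi)$ and then over $\phi$ in the event, mimicking the calculation \eqref{entsum}--\eqref{upbd:pinlinear}, combines the exponential-in-$|A|$ gain with the entropy $(1+e^{-4\gb(m+1)})^{|L|}\ge (1+e^{-4\gb(m+1)})^{T}$ and produces a bound of the desired form after rearranging. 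The geometric prefactor $\frac{1}{1-e^{-\gb N}}$ should arise from summing over an auxiliary discrete index (for instance the shift amount, or the number of boundary crossings of $A$) needed to refine the inner-boundary contribution.

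The main obstacle, in my view, is ensuring (near-)distinctness of the map $(\phi, A)\mapsto T_A\phi$: a uniform shift by $m+1$ places shifted $A$-sites in a height range that overlaps with values already attained by non-$A$ sites, so $A$ cannot be read off from $T_A\phi$ directly. The cleanest repair is to restrict $A$ to the subset $L_{\mathrm{iso}}(\phi)\subseteq L(\phi)$ of \emph{isolated} low sites, i.e.\ those whose four lattice neighbours are all above $H-m$; after the lift, such sites are identifiable as those vertices of $T_A\phi$ taking values in $[m+2, H+1]$ whose four neighbours remain in $\{0\}\cup[H-m+1,\infty)$. A short combinatorial argument (using that low sites can be packed with at most $O(1)$ overlap with their neighbourhoods) ensures that on the event $|L|\ge T$ one still has $|L_{\mathrm{iso}}|\gtrsim T$, so the restriction does not worsen the effective threshold beyond absorbable constants.

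Finally, calibrating the shift amount against the restriction is what should produce the specific rate $\frac{1}{2}e^{2\gb m}-4\gb(1+\kappa)$: the per-site energy penalty of the lift, the entropy of subset selection, and the $O(N)$ boundary penalty must balance, and I expect the condition $m > \lceil \frac{1}{2\gb}\log(8\gb(1+\kappa))\rceil$ is exactly what guarantees the entropy wins over the boundary cost, exactly as the analogous threshold in the proof of Lemma~\ref{lema:pinorder} ensures that the entropic gain from zero-subsets dominates the factor $e^{-4\gb N}$.
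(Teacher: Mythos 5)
Your overall entropy--tilting strategy is the right family of argument, but the specific operator $T_A\phi=\phi+(m+1)\ind_A$ cannot deliver the claimed rate, and this is a quantitative failure, not a technicality. With a uniform upward shift the per-site factor in the subset sum is $e^{-4\gb(m+1)}$, so the total exponential gain is at most of order $e^{-4\gb(m+1)}\,|L|\approx e^{-6\gb m}N^2$, whereas the lemma requires a gain of order $\tfrac12 e^{2\gb m}N$. These match only when $e^{-8\gb m}N\gtrsim 1$, i.e.\ $m\lesssim \frac{1}{8\gb}\log N$; for $m$ between $\frac{1}{8\gb}\log N$ and $H-1$ (where the event is still non-trivial, e.g.\ $m=H-1$ gives a gain of order $N^{1/2}$ against a boundary cost of order $N$) your bound does not even decay. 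The paper avoids this by doing two things you skip: (a) a union bound over levels, using that $\{|\phi^{-1}([1,H-m])|\ge \frac{e^{-2\gb m}}{1-e^{-2\gb}}N^2\}$ forces some single level $\ell=H-j$ to carry at least $e^{-2\gb j}N^2$ sites (this is also where the prefactor $\frac{1}{1-e^{-\gb N}}$ comes from); and (b) for that fixed level, an operator $V_A$ that lifts the \emph{whole} configuration by $1$ (keeping the zeros at $0$, which is why the event $|\phi^{-1}(0)|\le\gk N$ enters) while pushing the sites of $A\subseteq\phi^{-1}(\ell)$ \emph{down} to height $1$. The per-site cost is then $4\gb\ell=4\gb(H-j)$, and since $e^{-4\gb H}\ge N^{-1}$ the gain $e^{-4\gb(H-j)}\cdot e^{-2\gb j}N^2\ge e^{2\gb j}N$ is exactly what the statement demands at every level. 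The essential point you are missing is that the cheap move is not to lift the low sites; it is to lift everything else over them.

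The proposed repair of injectivity is also not sound. Restricting $A$ to low sites all of whose neighbours lie above $H-m$ can lose essentially everything: if $\phi^{-1}([1,H-m])$ contains a large connected cluster, none of its interior vertices is isolated in your sense, so $|L_{\mathrm{iso}}|$ need not be comparable to $|L|$ and the entropy collapses. In the paper's construction injectivity is immediate without any restriction: after applying $V_A$, the sites of $A$ are exactly the $1$-valued sites (zeros remain at $0$ and every other site is at height $\ge 2$), so $A$ and then $\phi$ are recovered from $V_A\phi$ and the fixed level $\ell$.
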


\begin{rem}
 The condition $m >\frac{1}{2 \gb} \log \left( 8 \gb(1+\gk) \right)$ is only to ensure that $\tfrac{1}{2}e^{2 \gb m}-4 \gb(1+\gk)>0$.
\end{rem}

\begin{proof}
Fix an integer $\ell \in [ 1, H-m ]$. For any subset $A \subseteq \phi^{-1}(\ell)$, we define  $V_A \phi: \gL_N \mapsto \bbZ_+$ as follows
\begin{equation}\label{mapphi}
(V_A \phi)(x) \colonequals
\begin{cases}
0, & \text{ if } x \in \phi^{-1}(0),\\
1, & \text{ if } x \in A,\\
\phi(x)+1, & \text{ if } x \not\in A \cup \phi^{-1}(0).
\end{cases}
\end{equation}
Observe that for $x \in A$ and $y \not \in A \cup \phi^{-1}(0)$ with $x \sim y$,
$$ \vert (V_A \phi)(x)- (V_A \phi)(y) \vert= \phi(y) \le \vert \ell- \phi(y)\vert+\ell, $$
and then
$$ \cH_N(V_A \phi) \le \cH_N(\phi)+4N+4 \vert \phi^{-1}(0)\vert+4 \ell \vert A \vert.  $$
Moreover, as
$\vert (V_A \phi)^{-1}(0)\vert= \vert \phi^{-1}(0)\vert$, we obtain
\begin{equation*}
\bbP_N^{\gb,h}\left( V_A \phi \right) \ge \bbP_N^{\gb,h}\left( \phi \right) e^{-4\gb N- 4\gb \vert  \phi^{-1}(0)\vert-4  \gb \ell \vert A \vert}.
\end{equation*}
Similarly to \eqref{entsum}, we have
\begin{equation}\label{sumsubsetV}
\begin{aligned}
    \sum_{A \subseteq \phi^{-1}(\ell)} \bbP_N^{\gb,h}\left( V_A \phi \right) &\ge \bbP_N^{\gb,h}\left( \phi \right) \sum_{A \subseteq \phi^{-1}(\ell) }  e^{-4\gb N- 4\gb \vert  \phi^{-1}(0)\vert-4 \gb \ell  \vert A \vert}\\
&= \bbP_N^{\gb,h}\left( \phi \right)  e^{-4\gb N- 4\gb \vert  \phi^{-1}(0)\vert} \left( 1+e^{-4 \gb \ell }\right)^{\vert \phi^{-1}(\ell)\vert}\\
&\ge  
\bbP_N^{\gb,h}\left( \phi \right)  \exp \left( -4\gb N- 4\gb \vert  \phi^{-1}(0)\vert + \tfrac{1}{2}e^{-4 \gb \ell}\vert \phi^{-1}(\ell)\vert \right)
\end{aligned}
\end{equation}
where  we have used $(1+x) \ge e^{x/2}$ for $x \in [0,1]$ in the last inequality.

Note that for $A,A' \subseteq \phi^{-1}(\ell)$ with $A \neq A'$, we have $$V_A \phi \neq V_{A'}\phi.$$
Moreover, for $\phi \neq \psi \in \gO_N$, $A \subset \phi^{-1}(\ell)$ and $B \subset \psi^{-1}(\ell)$, we have
$$ V_A \phi \neq V_B\psi, $$
since we can recover $A$  by $1-$valued sites of $V_A \phi$ and then proceed to recover $\phi$.
Therefore,  by~\eqref{sumsubsetV}, denoting $j=H-\ell$ we obtain

\begin{equation}\label{upbd:downspike}
    \begin{aligned}
1 &\ge \sumtwo{\phi: \; \vert \phi^{-1}(\ell) \vert \ge e^{-2 \gb j}N^2}{\vert \phi^{-1}(0) \vert \le \gk N} 
\ \sum_{A \subset \phi^{-1}(\ell)}
\bbP_N^{\gb, h} \left( V_A \phi \right) \\
&\ge \exp \left( -4 \gb N  -4  \gb  \gk N+\tfrac{1}{2}e^{2 \gb j}N \right)
\bbP_N^{\gb,h} \left( \{ \vert \phi^{-1}(\ell) \vert \ge e^{-2 \gb j}N^2 \} \cap  \{ \vert \phi^{-1}(0) \vert \le \gk N\} \right).
\end{aligned}
\end{equation}
Moreover, as
$$ \left\{ \vert  \phi^{-1}([1,H-m])\vert \ge \frac{ e^{-2 \gb m}}{1-e^{-2 \gb}}N^2 \right\} \subset \bigcup_{j=m}^{H-1} \left\{ \vert  \phi^{-1}(H-j)\vert \ge  e^{-2 \gb j}N^2 \right\}, $$
by union bound and \eqref{upbd:downspike} we obtain
    \begin{align*}
\bbP_N^{\gb,h} 
& \left( \left\{ \vert \phi^{-1}([1,H-m]) \vert \ge \frac{ e^{-2 \gb m}}{1-e^{-2 \gb}} N^2 \right\} \bigcap  \Big\{ \vert \phi^{-1}(0) \vert \le  \gk N \Big\} \right)  \\
 \le & \sum_{j=m}^{H-1} \bbP_N^{\gb,h} \left( \{ \vert \phi^{-1}(H-j) \vert \ge  e^{-2 \gb j} N^2 \} \bigcap  \Big\{ \vert \phi^{-1}(0) \vert \le  \gk N \Big\}\right) \\
\le &\sum_{j=m}^{H-1} \exp \left( 4 \gb N +4  \gb  \gk N-\tfrac{1}{2}e^{2 \gb j}N \right)\\
\le & \frac{1}{1-e^{-\gb N}} \exp \left( 4 \gb N +4  \gb  \gk N-\tfrac{1}{2}e^{2 \gb m}N \right),
\end{align*}
where in the last inequality we have used that for $j \ge 0$,
$$ \frac{  \exp \left(-\frac{1}{2} e^{2 \gb (j+1)}N \right)  }{ \exp \left( -\frac{1}{2} e^{2 \gb j}N\right) }  \le \exp \left(-\gb e^{2 \gb j}N \right) \le e^{-\gb N}.$$
This concludes the proof. \qedhere
\end{proof}

\subsection{Proof of 
 Theorem ~\ref{th:subcritical}-\eqref{downwardspike}}
 For for  all $N \ge 1$, we have
\begin{align*}
\bbP_N^{\gb,h} &\left( \left\vert \phi^{-1}([0,H-m]) \right\vert  >  2e^{-2 \gb m}N^2\right) \\
& \le \bbP_N^{\gb,h} \left( \left\vert \phi^{-1}(0) \right\vert  > \gk N\right)+  \\
&\quad \bbP_N^{\gb,h} \left( \Big\{ \vert \phi^{-1}(0)\vert \le \gk N \Big\}  \bigcap \left\{ \left\vert \phi^{-1}([1,H-m])\right\vert \ge \frac{ e^{-2 \gb m}}{1-e^{-2 \gb}}N^2 \right\} \right)\\
&\le e^{-\delta N}+\frac{1}{1-e^{-\gb N}} \exp \left( - \left( \tfrac{1}{2}e^{2 \gb m} -4 \gb(1+\gk)\right) N \right)\\
&\le 3  \exp \left( - \min \left( \tfrac{1}{2}e^{2 \gb m} -4 \gb(1+\gk),\delta \right) N \right),
\end{align*}
where we have applied Lemma \ref{lema:pinorder} and  Lemma \ref{lema:downspike} in the second inequality.
\qed

\section{Theorem \ref{th:spikes2}:  upper bound on non-isolated zeros at criticality}\label{sec:critical}
 This section is devoted to the proof of Theorem \ref{th:spikes2}.
  Inspired by \cite[Lemma 3.1]{lacoin2018wetting}, we first observe that for $x_1,x_2,x_3,x_4 \in \bbZ_+$,
\begin{equation}\label{replace:isospikes}
\sum_{k=- \infty}^{0} \exp \left( - \beta \sum_{i=1}^4 \vert x_i-k \vert \right)= \exp\left( h_w- \beta\sum_{i=1}^4 x_i\right).
\end{equation}
Define a new state space
\begin{equation}\label{newspace}
\gO_N^* \colonequals  \left\{\psi: \ \gL_N \to \bbZ \ | \text{ if } \psi(x) \le -1, \forall y \in \gL_N, y \sim x, \psi(y) \ge 1 \right\}.
\end{equation}
Notice that if $\psi \in\gO_N^*$, then $\max(\psi,0)\in \gO_N$ (as defined in \eqref{nopin:spaceprob}).
By \eqref{replace:isospikes}, we have
\begin{equation*}
\cZ_N^{\beta, h_w}=\sum_{\psi \in \gO_N^*} \exp \left(-\beta \cH_N(\psi)+h_w \vert  q_{2+}(\psi)\vert  \right)
\end{equation*}
where $\cZ_N^{\beta, h_w}$ is defined in \eqref{partfunct} and $\cH_N$ is defined in \eqref{def:hamiltonian} with zero boundary condition.
Define a new probability measure
 $ \tP_N$ on $\gO_N^*$ as follows:
\begin{equation}\label{def:tP}
\forall \psi \in \gO_N^*, \quad \tP_N(\psi) \colonequals \frac{1}{\cZ_N^{\beta,h_w}} \exp\left( -\beta \cH_N(\psi)+h_w \vert q_{2+}(\psi) \vert  \right).
\end{equation}
Observation~\eqref{replace:isospikes} yields the following relation between $\tP_N$ and $\bbP_N^{\beta, h_w}$: for any $\phi\in\gO_N$, 
\begin{equation*}
\bbP_N^{\beta, h_w}(\phi)=\tP_N \left( \left\{ \psi \in \gO_N^*: \max(\psi,0)=\phi  \right\} \right).
\end{equation*}
 In particular, we have
\begin{equation}\label{relate:q2}
\bbP_N^{\beta, h_w} \left( \left\{ \phi \in \gO_N: \ \vert q_{2+}(\phi) \vert  \ge C N \right\} \right)
=\tP_N \left( \left\{ \psi \in \gO_N^*:  \ \vert q_{2+}(\psi) \vert  \ge C N \right\} \right),
\end{equation}
since for any $\psi\in \gO_N^*$, we have $q_{2+}(\max(\psi,0)) = q_{2+}(\psi)$.

From now on, we deal with the r.h.s. of \eqref{relate:q2}.
For any subset $A \subseteq q_{2+}(\psi)$, we let $\cN(A)$ be the edge boundary of $A$, defined by
\begin{equation}\label{def:Na}
\cN(A) \colonequals \left\{ \{ x,y\} \in E(\bbZ^2): \ x \in A, y \in A^{\complement} \right\}, \end{equation}
and define $U_A \psi \in \gO_N^*$ as
\begin{equation*}
(U_A \psi)(x)\colonequals
\begin{cases}
\psi(x)+1 & \text{ if }  x \not\in A,\\
0 & \text{ if } x \in A.
\end{cases}
\end{equation*}
For ease of notation, we fix $\psi\in \gO_N^*$ and write $q_{2+}(A) \colonequals q_{2+}(U_A \psi)$ in the sequel.
Observing 
$\cH_N(U_A \psi) \le \cH_N(\psi)+4\beta N+ \beta \vert \cN(A) \vert$,
we have by \eqref{def:tP}:
\begin{equation*}
\tP_N(U_A \psi) \ge \tP_N( \psi) \exp \left( -4 \beta N-  \beta \vert \cN(A) \vert - h_w\left( \vert q_{2+}(\psi) \vert - \vert q_{2+}(A) \vert\right)  \right).
\end{equation*}
Let $V_1, V_2, \cdots, V_k$ be the connected components of $q_{2+}(\psi)$, and write $A_i= A \cap V_i$. We sum over all subsets $A \subseteq q_{2+}(\psi)$ to obtain
\begin{equation}\label{comput:component}
\begin{aligned}
\sum_{A \subseteq q_{2+}(\psi)}&\tP_N(U_A \psi) \\
&\ge 
\tP_N( \psi) \exp \left(-4 \beta N- h_w \vert q_{2+}(\psi) \vert  \right)  \sum_{A \subseteq q_{2+}(\psi)} \exp \left( -  \beta \vert \cN(A) \vert + h_w\vert q_{2+}(A) \vert  \right)\\
& =\tP_N( \psi) \exp \left(-4 \beta N- h_w \vert q_{2+}(\psi) \vert  \right)  \sum_{A_1, \cdots, A_k}  \prod_{i=1}^k \exp \left( -  \beta \vert \cN(A_i) \vert + h_w\vert q_{2+}(A_i) \vert  \right)\\
& =\tP_N( \psi) \exp \left(-4 \beta N \right)   \prod_{i=1}^k  \exp \left(- h_w \vert V_i \vert\right) \sum_{A_i \subseteq V_i }  \exp \left( -  \beta \vert \cN(A_i) \vert + h_w\vert q_{2+}(A_i) \vert  \right)
\end{aligned}
\end{equation}
where 
we have used that $q_{2+}(\psi)=  V_1 \cup V_2\cdots \cup  V_k$ is the disjoint union of $V_1, V_2, \cdots, V_k$.

Note that for any finite connected subgraph of $\bbZ^2$, after deleting some edges (but keeping all the vertices), the graph can be decomposed into 
a disjoint union of patterns from Figure \ref{fig:4patterns} (up to rotation and reflection).
 From now on, we focus on one connected component $V_i$ in the r.h.s. of \eqref{comput:component}. 
Denote by $E_i$ the set of edges in this disjoint union of patterns. 
In the graph $(A_i,E_i)$ we define the count of non-isolated spikes (similar to~\eqref{def:q}) by
\[
\widetilde{q}_{2+}(A_i) \colonequals \left\{ x \in A_i: \ \psi(x)=0, \exists y \in A_i, (x,y)\in E_i, \psi(y) =0\right\},
\]
and the edge boundary, similar to~\eqref{def:Na}, by
\[
\widetilde{\cN}(A_i) =  \left\{ \{x,y\}\not\in E_i:  x\in A_i \lor y\in A_i  \right\}.
\]
Observe that 
\begin{equation}\label{eq:tilde}
| \tilde{q}_{2+}(A_i)| \le |q_{2+}(A_i)|, \text{  and   }|\tilde{\cN}(A_i)| \ge |\cN(A_i)|.
\end{equation}
The next lemma will therefore provide a lower bound on the r.h.s. in \eqref{comput:component}.

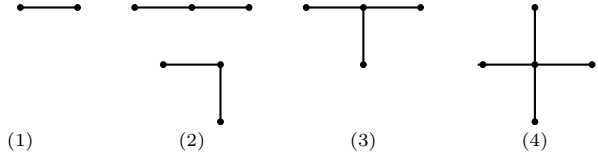
\begin{figure}[h]
 \centering
   \begin{tikzpicture}[scale=.38,font=\tiny]
   
    \draw[thick] (0,0) -- (2,0);
     \foreach \x in {0,2} {\draw[fill] (\x,0) circle [radius=0.1];}

     \node[below] at (0,-4) {$(1)$};

  \draw[thick]  (4,0)--(8,0);
  \foreach \x in {4,6,8} {\draw[fill] (\x,0) circle [radius=0.1];}

 \draw[thick]  (4+1,-2)--(6+1,-2)--(6+1,-4);
  \foreach \x in {4+1,6+1} {\draw[fill] (\x,-2) circle [radius=0.1];}
  \draw[fill] (7,-4) circle [radius=0.1];
\node[below] at (6,-4) {$(2)$};

 \draw[thick]  (10,0)--(14,0);
  \draw[thick]  (12,0)--(12,-2);

  \foreach \x in {10,12,14} {\draw[fill] (\x,0) circle [radius=0.1];}
  \draw[fill] (12,-2) circle [radius=0.1];
  \node[below] at (12,-4) {$(3)$};
  
  \draw[thick]  (16,-2)--(20,-2);
    \foreach \x in {16.18,20} {\draw[fill] (\x,-2) circle [radius=0.1];}

  \draw[thick]  (18,0)--(18,-4);
      \foreach \x in {0,-2,-4} {\draw[fill] (18,\x) circle [radius=0.1];}
  \node[below] at (18,-4) {$(4)$};

   \end{tikzpicture}
 
  \caption{The vertices of any connected set of size at least $2$ can be covered by a disjoint union of these four patterns. (The two configurations in $(2)$ are considered as the same pattern, as both have $3$ vertices and  $8$ boundary edges.)
   }\label{fig:4patterns}
 \end{figure}

\begin{lemma} \label{lema:smalllwbd}
If $\gb \ge 1$  and $V$ is the vertex set of one of the patterns shown in Figure \ref{fig:4patterns}, then
\begin{equation}\label{lwbd:smallset}
\exp \left(-h_w \vert V \vert \right) \sum_{B \subseteq V} \exp \left(-\beta \vert \cN(B) \vert+h_w \vert q_{2+}(B)\vert \right) \ge 1+\frac{1}{2}e^{-6 \beta}.
\end{equation}
\end{lemma}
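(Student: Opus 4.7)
The plan is to verify~\eqref{lwbd:smallset} by explicit case analysis for each of the four patterns in Figure~\ref{fig:4patterns}. Set $u \colonequals e^{-4\beta}$; from the definition of $h_w$ one checks $e^{-h_w} = 1-u$ and $e^{-6\beta} = u^{3/2}$, so the left-hand side of~\eqref{lwbd:smallset} rewrites as
$$ S(V) \colonequals \sum_{B \subseteq V} e^{-\beta|\cN(B)|}(1-u)^{|V|-|q_{2+}(B)|}. $$
For each $B \subseteq V$, $|\cN(B)| = \sum_{v \in B}(4 - d_B(v))$ with $d_B(v)$ the number of $\bbZ^2$-neighbors of $v$ inside $B$, and $|q_{2+}(B)| = |\{v \in B : d_B(v) \ge 1\}|$. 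The hypothesis $\beta \ge 1$ yields $u \le e^{-4}$.

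I would decompose $S(V) = T(V) + R(V)$, where $T(V)$ is the contribution of the \emph{independent} subsets $B \subseteq V$ (no two of whose vertices are adjacent in $\bbZ^2$), and $R(V)$ is the remainder. Since independent subsets have $|\cN(B)| = 4|B|$ and $|q_{2+}(B)| = 0$, the first piece factorizes as
$$ T(V) = (1-u)^{|V|}\, I_V(u), \quad I_V(u) \colonequals \sum_{B \text{ independent}} u^{|B|}. $$
A direct expansion of $(1-u)^{|V|} I_V(u)$ for each pattern yields $T(V) \ge 1 - c_V u^2$ with $c_V = 3,5,7,9$ for patterns $1,2,3,4$ respectively (valid for $u \le 1/2$). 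The remainder $R(V)$ is a sum of non-negative terms; retaining only those subsets $B$ consisting of two adjacent vertices of $V$, each such pair contributes $u^{3/2}(1-u)^{|V|-2}$ (since $|\cN(B)|=6$ and $|q_{2+}(B)|=2$). The number of such pairs equals the edge count $m_V = 1,2,3,4$ of the pattern, so
$$ S(V) - 1 \ge m_V \, u^{3/2} (1-u)^{|V|-2} - c_V \, u^2. $$

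The desired inequality $S(V) \ge 1 + \tfrac{1}{2}u^{3/2}$ then reduces to
$$ m_V\, (1-u)^{|V|-2} - \tfrac{1}{2} \ge c_V \, u^{1/2}, $$
which for $\beta \ge 1$ is an easy arithmetic check using $u^{1/2} \le e^{-2} \approx 0.135$ and $(1-u)^{|V|-2} \ge (1-e^{-4})^3 > 0.94$; substituting the four triples $(m_V, c_V, |V|)$ confirms the inequality with room to spare in each case (for instance, pattern~$4$ requires $4 \cdot 0.94 - 0.5 \approx 3.3 \ge 9 \cdot 0.135 \approx 1.2$). The main tedium is the enumeration of subsets and the explicit expansion of $(1-u)^{|V|} I_V(u)$; for pattern~$4$ (the plus shape) the four-fold rotational symmetry reduces the $32$ subsets to about a dozen distinct cases. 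No subtle cancellation occurs: the positive $u^{3/2}$-contribution from single-edge subsets dominates the $O(u^2)$ loss in $T(V)$ as soon as $\beta \ge 1$.
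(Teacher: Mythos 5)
Your proof is correct and takes essentially the same route as the paper: a case-by-case verification over the four patterns, keeping only low-order subsets and discarding the rest, with the dominant gain $m_V e^{-6\beta}(1-u)^{|V|-2}$ coming from the adjacent pairs and an $O(u^2)$ loss from the independent-set contribution. Your bookkeeping via the independence polynomial $I_V(u)$ and the uniform reduction to $m_V(1-u)^{|V|-2}-\tfrac12\ge c_V u^{1/2}$ is a tidy repackaging of the paper's direct expansions (I checked $c_V=3,5,7,9$ and the tightest case, pattern $(1)$, where $0.5\ge 3e^{-2}\approx 0.406$), but it is not a different argument.
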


\begin{proof}
For simplicity of notation, we write $h=h_w$. We will repeatedly use that
$e^{-h}=1-e^{-4\beta}$ and $\beta \ge 1$ without further reference.
We consider the four patterns in Figure \ref{fig:4patterns}, case by case. 
If $\vert V\vert=2$ (\textit{i.e.} (1) in Figure \ref{fig:4patterns}), then the l.h.s. of \eqref{lwbd:smallset} equals to 
\begin{multline*}
e^{-2h} \left(1+ 2e^{-4 \beta}+e^{-6 \beta+2h} \right) =
(1-e^{-4\gb})^2 (1+2 e^{-4 \gb})+e^{-6 \gb} 
\ge 
1+e^{-6 \beta}(1-3e^{-2\beta})>1+\frac{1}{2}e^{-6 \beta}.
\end{multline*}

In the case $\vert V \vert=3$ (pattern (2) in Figure \ref{fig:4patterns}), the l.h.s. of \eqref{lwbd:smallset} equals to 
\begin{multline*}
e^{-3 h} \left( 1+ 3e^{-4 \beta}+e^{-8 \beta}+2e^{-6 \beta+2h}+e^{-8 \beta+3h}\right) 
\ge (1-e^{-4\gb})^3 \left(1+3 e^{-4 \beta} \right)+2e^{-6 \beta-h} \\
\ge 1+2 e^{-6 \beta} \left(1-3e^{-2 \beta}-e^{-4\gb} \right) \ge 1+e^{-6 \beta}.
\end{multline*}

Consider now the case $\vert V \vert=4$, corresponding to pattern (3) in Figure \ref{fig:4patterns}.   By counting connected subsets of size at most two,  the l.h.s. of \eqref{lwbd:smallset} is bounded from below by
\begin{equation*}
\begin{aligned}
e^{-4h} \left(1+ 4 e^{-4 \beta}+3e^{-6 \beta+2h}\right)
&\ge (1-e^{-4\gb})^4\left(1+4e^{-4 \beta} \right)+3 e^{-6 \beta-2h}\\
&\ge 1-10 e^{-8 \gb}+3 e^{-6 \beta-2h}
\ge 1+e^{-6 \beta}.
\end{aligned}
\end{equation*}

Lastly, consider the case $\vert V\vert=5$, corresponding to pattern (4) in Figure \ref{fig:4patterns}.  By counting connected subsets of size at most two,  the l.h.s. of \eqref{lwbd:smallset} is larger than 
\begin{equation*}
\begin{aligned}
e^{-5h} \left( 1+5 e^{-4 \beta}+4e^{-6\beta+2h} \right)
&\ge  1-15e^{-8 \gb}+ 4e^{-6\beta+2h}(1-5e^{-4\gb})\\
&\ge 1+e^{-6 \beta} \left(4 \frac{1-5e^{-4\gb}}{(1-e^{-4\gb})^2}-15 e^{-2\gb} \right)
 \ge 1+e^{-6 \beta}.
\end{aligned}
\end{equation*}
This concludes the proof.
\end{proof}

With Lemma \ref{lema:smalllwbd} at hand, we are ready to prove Theorem \ref{th:spikes2}.

\begin{proof}[Proof of Theorem  \ref{th:spikes2}]
Using~\eqref{eq:tilde} and~\eqref{lwbd:smallset}, we may continue the inequality~\eqref{comput:component} to obtain
\begin{equation}\label{lwbd:decomposed}
\sum_{A \subseteq q_{2+}(\psi)}\tP_N(U_A \psi) \ge
e^{-4 \beta N} \tP_N(\psi) \left(1+ \frac{1}{2}e^{-6 \beta} \right)^{\vert q_{2+}(\psi)\vert/5},
\end{equation}
 where we have used that the total numbers of patterns covering $q_{2+}(\psi)$ is at least $\vert q_{2+}(\psi)\vert/5 $. 
Note that for $A \neq B  \subset q_{2+}(\psi)$ we have $U_A \psi \neq U_{B} \psi$ since $(U_A \psi) |_{A \setminus B}=0$ and $(U_{B} \psi) |_{A \setminus B}=1$.
Moreover, for $\psi, \psi' \in \gO_N^*$ and $A \subset q_{2+}(\psi), A' \subset q_{2+}(\psi')$, we have
$$ U_A \psi \neq U_{A'} \psi'.$$
To see this, note that 
\[
A = \{x\in \gL_N: \: (U_A\psi)(x)=0, \: \exists y\in \gL_N, \: y \sim x, \: (U_A\psi)(y)\in\{0,1\} \}.
\]
Thus, given $U_A \psi$, we can first recover the set $A$ and then proceed to recover~$\psi$.
 Therefore, from~\eqref{lwbd:decomposed} we obtain
\begin{equation*}
\begin{aligned}
1 &\ge \sum_{\psi \in \gO_N^*: \; \vert q_{2+}(\psi) \vert \ge CN } \; \sum_{A \subset q_{2+}(\psi)} \tP_N \left( U_A \psi \right)  \\
&\ge  \sum_{\psi \in \gO_N^*:\;  \vert q_{2+}(\psi) \vert \ge CN } e^{-4\beta N} \, \tP_N (\psi) \left( 1+ \tfrac{1}{2} e^{-6 \beta}\right)^{\vert q_{2+}(\psi)\vert /5}\\
&\ge e^{-4 \beta N} \left(1+ \tfrac{1}{2}e^{-6\beta} \right)^{CN/5} \tP_N \left(\left\{\psi \in \gO_N^*: \; \vert  q_{2+}(\psi)\vert \ge CN \right\}\right)\\
&\ge e^{-4 \beta N} e^{\frac{C}{20}e^{-6 \beta} N} \; \tP_N \left(\{\psi \in \gO_N^*:\; \vert  q_{2+}(\psi)\vert \ge CN\} \right),
\end{aligned}
\end{equation*}
where the last step used the inequality $1+x \ge e^{\frac{1}{2}x}$ for $x \in [0,1].$
By \eqref{relate:q2}, this concludes the proof.
\end{proof}

Now we move to prove Proposition \ref{lemacrit:downspike}.
\begin{proof}[Proof of Proposition \ref{lemacrit:downspike}]
For $\ell \in \lint 1, H_w-m \rint$, with exactly the same argument as in \eqref{sumsubsetV} and \eqref{upbd:downspike}, 
setting  $m=H_w-\ell$ we have
\begin{multline}\label{upbd:critdownward}
1 \ge \sumtwo{\psi: \; \vert \psi^{-1}(\ell) \vert \ge e^{-2 \gb m}N^2}{\vert \psi^{-1}(0) \vert \le C N^{\frac{4}{3}}} 
\ \sum_{A \subset \phi^{-1}(\ell)}
\bbP_N^{\gb, h_w} \left( V_A \phi \right) \\
\ge \exp \left( -4 \gb N  -4  \gb  C N^{\frac{4}{3}}+\frac{1}{2}e^{2 \gb m}N^{\frac{4}{3}} \right)
\bbP_N^{\gb,h_w} \left( \{ \vert \phi^{-1}(\ell) \vert \ge e^{-2 \gb m}N^2 \} \cap  \{ \vert \phi^{-1}(0) \vert \le C N^{\frac{4}{3}}\} \right)
\end{multline}
where $V_A \phi$ is defined in \eqref{mapphi}.
Moreover, as $(1-e^{-2 \gb})^{-1} \le 2$ and
$$ \left\{ \vert  \phi^{-1}([1,H_w-m])\vert \ge \frac{ e^{-2 \gb m}}{1-e^{-2 \gb}}N^2 \right\} \subset \bigcup_{i=m}^{H_w-1} \left\{ \vert  \phi^{-1}(H_w-i)\vert \ge  e^{-2 \gb i}N^2 \right\}, $$
by union bound and \eqref{upbd:critdownward} we obtain
\begin{equation*}
\begin{aligned}
&\bbP_N^{\gb,h_w} \left( \left\{ \vert \phi^{-1}([1,H_w-m]) \vert \ge \frac{ e^{-2 \gb m}}{1-e^{-2 \gb}} N^2 \right\} \bigcap  \Big\{ \vert \phi^{-1}(0) \vert \le  C N^{\frac{4}{3}} \Big\} \right) \\
\le &\sum_{i=m}^{H_w-1} \exp \left( 4 \gb N +4  \gb  C N^{\frac{4}{3}}-\frac{1}{2}e^{2 \gb i}N^{\frac{4}{3}} \right)\\
\le &\frac{1}{1-e^{-\gb N^{\frac{4}{3}}}} \exp \left( 4 \gb N +4  \gb  C N^{\frac{4}{3}}-\frac{1}{2}e^{2 \gb m}N^{\frac{4}{3}} \right),
\end{aligned}
\end{equation*}
where in the last inequality we have used that for $j \ge 0$,
$$ \frac{  \exp \left(-\frac{1}{2} e^{2 \gb (j+1)}N^{\frac{4}{3}} \right)  }{ \exp \left( -\frac{1}{2} e^{2 \gb j}N^{\frac{4}{3}} \right) }  \le \exp \left(-\gb e^{2 \gb j}N^{\frac{4}{3}} \right) \le e^{-\gb N^{\frac{4}{3}}}.$$
\end{proof}
 
\subsection*{Acknowledgments}
We are grateful to Hubert Lacoin for suggesting the problem, thank  Ohad Feldheim, Hubert Lacoin and Ron Peled for enlightening discussions,  and thank  Tom Hutchcroft and Fabio Martinelli for pointing out the references \cite{Gheissari2021entropic} and \cite{caputo2016SOSScaling} respectively. 
N.F. is supported by Israel Science Foundation grant 1327/19.  S.Y. is  supported by the Israel Science Foundation grants 1327/19 and 957/20.
This work was partially performed  when S.Y. was  visiting  IMPA.

\bibliographystyle{abbrv}
\bibliography{library.bib}
\end{document}